\documentclass[11pt]{amsart}

\usepackage{graphicx}
\usepackage{amsmath,amssymb,amsfonts}
\usepackage{amsthm}
\usepackage{mathrsfs}
\usepackage{tikz-cd}
\usepackage{hyperref}

\newcommand{\OG}{\mathcal{O}_G}
\newcommand{\OH}{\mathcal{O}_H}
\newcommand{\Cic}{\underline{\mathcal{C}}}
\newcommand{\Cs}{\underline{\mathcal{C}}^{\otimes}}
\newcommand{\Ci}{\mathcal{C}}
\newcommand{\Ztwo}{C_2}

\newcommand{\op}{\operatorname{op}}
\newcommand{\BOG}{\operatorname{BO}_n(G)}
\newcommand{\Top}{\operatorname{Top}}
\newcommand{\TopG}{\underline{\Top}^G}
\newcommand{\Mfld}{\operatorname{Mfld}}
\newcommand{\fr}{\operatorname{-fr}}
\newcommand{\MG}{\underline{\Mfld}^G}
\newcommand{\MGB}{\underline{\Mfld}^{G,f \fr}}
\newcommand{\MGs}{\underline{\Mfld}^{G,\sqcup}}
\newcommand{\MGBs}{\underline{\Mfld}^{G,f\fr,\sqcup}}
\newcommand{\Fin}{\operatorname{Fin}}
\newcommand{\FinG}{\underline{\Fin}^G_{\ast}}
\newcommand{\FinH}{\underline{\Fin}^H_{\ast}}
\newcommand{\Disk}{\operatorname{Disk}}
\newcommand{\DG}{\underline{\Disk}^G}
\newcommand{\DGB}{\underline{\Disk}^{G,f\fr}}
\newcommand{\DGs}{\underline{\Disk}^{G,\sqcup}}
\newcommand{\DGBs}{\underline{\Disk}^{G,f\fr,\sqcup}}
\newcommand{\Rep}{\operatorname{Rep}}
\newcommand{\RG}{\underline{\Rep}_n (G)}
\newcommand{\RGs}{\underline{\Rep}^{\sqcup}_n (G)}
\newcommand{\RGBs}{\underline{\Rep}^{f\fr,\sqcup}_n (G)}
\newcommand{\crt}{\operatorname{-}}
\newcommand{\OGTop}{\mathcal{O}_G\crt\Top}
\newcommand{\RHVs}{\underline{\Rep}^{V\fr,\sqcup}_n (H)}
\newcommand{\RGHs}{\underline{\Rep}^{G/H\fr,\sqcup}_n (G)}
\newcommand{\RHV}{\underline{\Rep}^{V\fr}_n (H)}
\newcommand{\RGH}{\underline{\Rep}^{G/H\fr}_n (G)}
\newcommand{\DGH}{\mathcal{D}^{G,G/H\fr}}
\newcommand{\DHV}{\mathcal{D}^{H,V\fr}}
\newcommand{\Fun}{\operatorname{Fun}}
\newcommand{\FunG}{\Fun_G}
\newcommand{\FunGs}{\FunG^{\otimes}}
\newcommand{\Sp}{\operatorname{Sp}}
\newcommand{\Spp}{\underline{\Sp}}
\newcommand{\Otwo}{O(2)}
\newcommand{\THR}{\operatorname{THR}}
\newcommand{\THH}{\operatorname{THH}}
\newcommand{\Map}{\operatorname{Map}}
\newcommand{\Res}{\operatorname{Res}}
\newcommand{\Orbit}{\operatorname{Orbit}}
\newcommand{\Alg}{\operatorname{Alg}}
\newcommand{\Env}{\operatorname{Env}}

\theoremstyle{thmstyleone}
\newtheorem{theorem}{Theorem}[section]
\newtheorem{proposition}[theorem]{Proposition}
\newtheorem{corollary}[theorem]{Corollary}
\newtheorem{lemma}[theorem]{Lemma}

\newtheorem{step}{Step}

\theoremstyle{thmstyletwo}
\newtheorem{example}[theorem]{Example}
\newtheorem{remark}[theorem]{Remark}

\theoremstyle{thmstylethree}
\newtheorem{definition}[theorem]{Definition}

\newtheorem{construction}[theorem]{Construction}
\newtheorem{mydescription}[theorem]{Description}

\title{Universal property of framed $G$-disc algebras}
\author{Aleksandar Miladinović}

\begin{document}

\begin{abstract}
Given a compact Lie group $G$ and its finite subgroup $H$ we prove that the $\infty$-category of $G/H$-framed $G$-disc algebras taking values in a $G$-symmetric monoidal category $\Cs$ is equivalent to the $\infty$-category of $V$-framed $H$-disc algebras (where $V$ is an $H$-representation) which take values in $\Cs_H$, the underlying $H$-symmetric monoidal subcategory of $\Cs$. We will use this construction to refine the $C_2$-action on the real topological Hochschild homology to an $O(2)$-action.
\end{abstract}

\maketitle
\tableofcontents

\section{Introduction}
\addtocontents{toc}{\protect\setcounter{tocdepth}{1}}

As equivariant version of discs, $G$-discs represent fundamental objects of the equivariant version of factorization homology. Factorization homology, developed by Ayala and Francis (\cite{AF15}), or chiral homology, defined by Lurie (\cite{HA}), is an invariant of a geometric and algebraic input. The equivariant version of this construction is given by Horev \cite{AH19} for finite groups and subsequently by the author in \cite{Mil22} for $G$ a compact Lie group (where the orbit category consists of those $G$-orbits with finite stabilizers). The geometric input of factorization homology is given by $n$-dimensional (framed) $G$-manifolds (where $n$ is a fixed number), while the algebraic input is given by (framed) $G$-disc algebras. In such construction $G$-discs represent the meeting point of algebra and geometry of a $G$-manifold. On one hand, $G$-discs give us an insight into the geometry of $G$-manifolds by capturing local properties. This can be seen through their link with the equivariant configuration spaces. On the other hand, $G$-discs encode $G$-disc algebras, which are $G$-symmetric monoidal functors with the source being the $G$-$\infty$-category of (framed) $G$-discs taking values in some $G$-symmetric monoidal category $\Cs$. These algebras are used as coefficients in the equivariant factorization homology.

This paper is dedicated to proving a universal property of the $G$-disc algebras, where $G$ is a compact Lie group. Informally, we show that the $\infty$-category of $G$-disc algebras with coefficients in some $G$-symmetric monoidal category $\Cs$ is equivalent to the $\infty$-category of $H$-disc algebras taking values in the underlying $H$-symmetric monoidal subcategory $\Cs_H$ of $\Cs$ with a suitable choice of framing on both $G$-discs and $H$-discs. In other words, the $G$-symmetric monoidal category of $G$-discs is freely generated by the $H$-symmetric monoidal category of $H$-discs with the suitable choice of framing on both $G$-discs and $H$-discs.

\subsection*{Framing on $G$-discs and $H$-discs}

Let $M$ be an $n$-dimensional $G$-manifold and let $f: B\rightarrow\BOG$ be a $G$-map. By $f$-framing (or $B$-framing) on $M$ we mean a $G$-map $f_M: M\rightarrow B$ which fits into a $G$-homotopy commutative diagram
\begin{center}
\begin{tikzcd} [row sep=2em, column sep=2em]
 & B \arrow[dr,"f"] & \\
M \arrow[ur,"f_M"]\arrow[rr,"\tau_M"] & & \BOG
\end{tikzcd}
\end{center}
where $\tau_M$ is the $G$-tangent bundle classifying map of $M$. If $B=\ast$ (i.e. $B$ is a point), the framing corresponds to the trivialization of the tangent bundle $TM\cong M\times V$ where $V$ is a $G$-representation. In such case we write $V$-framing instead of $f$-framing.

As we have stated in the beginning, there is a suitable choice of framing on $G$-discs and $H$-discs under which we obtain the equivalence of $\infty$-categories of $G$-disc algebras and $H$-disc algebras. Namely, the $H$-discs are framed over a point, which corresponds to the $V$-framing where $V$ is an $H$-representation. Additionally, this framing map $\ast\rightarrow\operatorname{BO}_n(H)$ is adjoint to the framing map $G/H\rightarrow\BOG$ of $G$-discs.

\subsection*{Main result}

The most important result of this paper is Theorem \ref{universalthm} which we state here as Theorem \ref{universalthmintro}.
\begin{theorem}
\label{universalthmintro}
Let $\Cs$ be a $G$-symmetric monoidal category. Then the $G$-sym\-metric monoidal category of $G/H$-framed $G$-discs is freely generated by the $H$-symmetric monoidal category of $V$-framed $H$-discs. In other words, there is an equivalence
\[
\FunGs(\underline{\Disk}^{G,G/H\fr},\Cs) \xrightarrow{\simeq} \Fun^{\otimes}_H (\underline{\Disk}^{H,V\fr},\Cs_H)
\]
where $\Cs_H$ is the underlying $H$-symmetric monoidal subcategory of $G$-sym\-metric monoidal category $\Cs$, and where $V$ is an $H$-representation.
\end{theorem}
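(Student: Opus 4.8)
The plan is to establish the equivalence via an adjunction argument, showing that $\underline{\Disk}^{G,G/H\fr}$ is the $G$-symmetric monoidal envelope of the $H$-symmetric monoidal category $\underline{\Disk}^{H,V\fr}$ along the induction functor. Concretely, I would exhibit a left adjoint to the restriction functor $\operatorname{Res}^G_H$ on $G$-symmetric monoidal categories, and identify the value of this induction on $\underline{\Disk}^{H,V\fr}$ with $\underline{\Disk}^{G,G/H\fr}$. The key structural fact to exploit is that framings over $G/H$ (equivalently, the adjunction between the framing map $\ast \to \operatorname{BO}_n(H)$ and $G/H \to \BOG$ mentioned above) encode precisely the data of an induced representation, so the geometry is governed by the Wirthmüller-type isomorphism relating $G$-discs with $G/H$-framing to $G\times_H$ of $H$-discs with $V$-framing.

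First I would set up the relevant adjunction at the level of underlying $G$-$\infty$-categories, before adding symmetric monoidal structure. The restriction functor $\operatorname{Res}^G_H: \underline{\Ci}^G \to \underline{\Ci}^H$ (sending a $G$-$\infty$-category to its restriction along $\OH \to \OG$) admits a left adjoint, the induction $\operatorname{Ind}^G_H$, given by left Kan extension. I would verify that $\operatorname{Ind}^G_H$ of the $H$-$\infty$-category of $V$-framed $H$-discs recovers the $G$-$\infty$-category of $G/H$-framed $G$-discs; this is where the adjointness of the two framing maps does the essential work, since a $G/H$-framing on a $G$-disc restricts, on each $H$-orbit, to a $V$-framing, and conversely every $G/H$-framed $G$-disc is induced from $H$-level data. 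I would make this precise by comparing the defining pullbacks (or left fibrations) that cut out the framed-disc categories, using the compatibility of the tangent classifying maps $\tau$ under induction.

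Next I would promote this to the symmetric monoidal setting. The correct statement is that the symmetric monoidal envelope $\Env$ of $\underline{\Disk}^{H,V\fr}$ (which uses the disjoint-union symmetric monoidal structures denoted with the $\sqcup$ superscripts in the preamble) induces to the symmetric monoidal envelope of $\underline{\Disk}^{G,G/H\fr}$, and that induction is symmetric monoidal for these $\sqcup$-structures because disjoint unions commute with the induction functor $G\times_H(-)$. Applying $\FunGs(-,\Cs)$ and using the universal property of the $G$-symmetric monoidal envelope together with the induction–restriction adjunction then yields
\[
\FunGs(\underline{\Disk}^{G,G/H\fr},\Cs) \simeq \FunGs(\operatorname{Ind}^G_H \underline{\Disk}^{H,V\fr},\Cs) \simeq \Fun^{\otimes}_H(\underline{\Disk}^{H,V\fr},\operatorname{Res}^G_H\Cs)
\]
and the last term is by definition $\Fun^{\otimes}_H(\underline{\Disk}^{H,V\fr},\Cs_H)$, since $\Cs_H = \operatorname{Res}^G_H\Cs$ is the underlying $H$-symmetric monoidal subcategory.

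The main obstacle I anticipate is the second step: verifying that induction is compatible with the symmetric monoidal envelope and that it genuinely carries the $V$-framed $H$-disc category onto the $G/H$-framed $G$-disc category at the level of the full $G$-symmetric monoidal structure, not merely the underlying categories. One must track the framing data coherently through the envelope construction, ensuring that the $\sqcup$-monoidal structure on induced objects is computed correctly and that no additional coherence data is lost; the adjunction of framing maps must be upgraded to an equivalence of the associated $G$-operads or symmetric monoidal fibrations. Establishing this compatibility — effectively a base-change statement for symmetric monoidal envelopes along induction — is the technical heart of the argument, and I would isolate it as a separate lemma before assembling the final equivalence.
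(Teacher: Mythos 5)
Your outer moves are fine, and in fact they bracket the paper's own proof: the reduction through the $G$-monoidal envelope is Proposition \ref{genv} (turning the statement into $\Alg_G(\DGH,\Cic)\simeq\Alg_H(\DHV,\Cic_H)$ with $\DGH=\RGHs$, $\DHV=\RHVs$), and your final adjunction step $\FunGs(\operatorname{Ind}^G_H(-),\Cs)\simeq\Fun^{\otimes}_H(-,\Cs_H)$ is exactly the paper's last step, which works formally because $\FinH\to\FinG$ is a left fibration, so maps over $\FinG$ out of anything living over $\FinH$ factor uniquely through the pullback $\Cs_H$. The genuine gap is your middle step. The identification $\operatorname{Ind}^G_H\,\underline{\Disk}^{H,V\fr,\sqcup}\simeq\underline{\Disk}^{G,G/H\fr,\sqcup}$ is not an auxiliary lemma feeding into the theorem --- it \emph{is} the theorem: ``freely generated'' means precisely that $\underline{\Disk}^{G,G/H\fr,\sqcup}$ corepresents $\Cs\mapsto\Fun^{\otimes}_H(\underline{\Disk}^{H,V\fr},\Cs_H)$, which is the statement that it is the value of the left adjoint. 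Deferring this to an unproved ``base-change for envelopes along induction'' lemma makes the argument circular, and the justification you sketch (induction computed objectwise; monoidality because $G\times_H(-)$ commutes with $\sqcup$) would fail where it matters.

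Here is the concrete failure. The naive candidate for the induced object, the composite $\DHV\to\FinH\to\FinG$, is \emph{not} a $G$-$\infty$-operad, and the comparison map $\theta:\DHV\to\DGH$ of the paper is far from an equivalence of operads: by Description \ref{lp10rmk21}, colors of $\DGH$ are spans $O\leftarrow U\to G/H$ in which the framing $U\to G/H$ need not factor through the structure orbit $O$, whereas every color in the image of $\theta$ has framing factoring through $O$. For instance, take any finite $K$ with $H\lneq K$, $U=G/H$ with framing the identity $U\xrightarrow{=}G/H$ and structure map $U\to O=G/K$; since $K$ is not subconjugate to $H$, there is no $G$-map $O\to G/H$, so this color is not hit. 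These extra colors carry exactly the indexed (norm-type) multiplications --- the analogue of $N^{\Ztwo}_e$ in Section \ref{applications} --- indexed by $G$-sets $[U\to O]$ not pulled back from $\FinH$; they are invisible to your first step, the underlying-category comparison, which is only the easy half of Proposition \ref{lp10thm23}. So your plan needs a tool showing that algebras over $\DGH$ are already determined by restriction along $\theta$ despite the missing colors and operations, i.e.\ that the operadic left Kan extension along $\FinH\to\FinG$ freely generates precisely them. That tool is what the paper's Section 3 supplies and your proposal omits: the notion of $G$-approximation (Definition \ref{defgapprox}), whose Cartesian-lift condition for active arrows is exactly what controls the indexed multiplications, together with the equivariant version of \cite[2.3.3.23]{HA} (Proposition \ref{ha23323}), applied via the verification in Proposition \ref{lp10thm23}. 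Without reproving something of this strength (or directly computing the operadic Kan extension), the ``disjoint unions commute with induction'' heuristic cannot close the gap, because the $G$-symmetric monoidal structure contains indexed operations beyond binary disjoint unions.
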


\textbf{What would be the motivation for showing this universal property?} The input of the $G$-factorization homology is a $G$-disc algebra together with a $G$-manifold which we regard as a genuine $G$-object in the parametrized $\infty$-category of $G$-manifolds, and as an output we receive a genuine $G$-object. The universal property allows us to replace $G$-algebra with an $H$-algebra, given the suitable framings of both $G$-discs and $H$-discs. To be more precise, a $V$-framed $H$-disc algebra $A_H$ in $\Cs_{H}$ (the underlying $H$-$\infty$-category of a $G$-$\infty$-category $\Cs$), gives us the corresponding $G/H$-framed $G$-algebra $A$ in $\Cs$. Moreover, computing $\int_{G/H} A$ now gives us a genuine $G$-object (i.e. a coCartesian section of $\Cs\rightarrow \OG^{\op}$) living in $\Cs$. To sum up, we have the following maps
\begin{center}
\begin{tikzcd} [row sep=2em, column sep=3em]
\{ \text{$V$-framed $H$-disc algebras in $\Cs_H$} \} \arrow[d] \\
\{ \text{$G/H$-framed $G$-disc algebras in $\Cs$} \} \arrow[d]\\
\{ \text{Genuine $G$-objects in $\Cs$} \}
\end{tikzcd}
\end{center}

Additionally, let $G=\Otwo$ and $H=\Ztwo$ with $\Cs = \Spp^{\Otwo}$ being the $G$-symmetric monoidal category of genuine $\Otwo$-spectra. Furthermore, let $A_H$ be a $\sigma$-framed $C_2$-algebra with coefficients in $\Spp^{C_2}$ where $\sigma$ is a one-dimensional sign representation. Then, by \cite[7.1.2]{AH19} the factorization homology of $S^1$ with a group $C_2$ acting by reflection gives the following equivalence $\int_{S^1} A_H \simeq \THR(A_H)$, where $\THR(A_H)$ is the topological real Hochschild homology of $A_H$. Using the universal property, we may obtain an $O(2)$-algebra $A$ corresponding to $A_H$. Moreover, considering $S^1$ with an action of $O(2)$, the factorization homology $\int_{S^1} A$ produces a genuine $O(2)$-object in $\underline{\Sp}^{O(2)}$ which represents real topological Hochchild homology of $A_H$ with a refined $O(2)$-action.

To conclude, the result on the universal property of framed $G$-discs may carve a path to a new insight into the norm maps of Hill, Hopkins and Ravenel \cite{HHR16}.

\subsection*{Organization by sections:}

\begin{enumerate}
\setcounter{enumi}{1}

\item \textbf{Preliminaries}: This section is expository in nature and contains a recollection of results on parametrized higher category theory, $G$-$\infty$-category of $G$-manifolds and consequently the definition and the description of the $G$-$\infty$-category of $G$-discs and its framed variants.

\item \textbf{$G$-approximations to $G$-$\infty$-operads}: In this section we develop the theory of $G$-approximations to $G$-$\infty$-operads, and consequently, prove Proposition \ref{ha23323}, the main result of this section.

\item \textbf{The universal property}: We use the theory of $G$-approximations to prove the main result, Theorem \ref{universalthm}.

\item \textbf{Applications}: In the final section, we give two examples as applications of the universal property.
\begin{itemize}
\item Associative algebra objects with genuine involution in $\underline{\Sp}^{\Ztwo}$ correspond to the $O(2)$-genuine objects in $\underline{\Sp}^{O(2)}$. In addition, we will see how to refine the $\Ztwo$-genuine structure on the real topological Hochschild homology to $O(2)$-genuine structure;
\item Associative algebra objects in the $\infty$-category of spectra $\Sp$ correspond to the $S^1$-genuine objects in $\underline{\Sp}^{S^1}$.
\end{itemize}
\end{enumerate}

\subsection*{Acknowledgements}

This paper is a revised version of the third part of the author's thesis. The author would like to thank Yonatan Harpaz for the introduction to this topic as well as for his insights and patience, Denis Nardin for his advices and recommendations. Additionally, the author would like to extend the graditude to all the people of the algebraic topology team at University Paris $13$. Finally, the author would like to than Branislav Prvulović from the University of Belgrade on his useful comments and conversations.

\subsection*{Funding}
This research received no external funding.

\addtocontents{toc}{\protect\setcounter{tocdepth}{2}}

\section{Preliminaries}

This section is expository in nature. It contains basic information on the higher parametrized category theory (see \cite{BDGNS16,DN16,DN17,JS18,NS22}) as well as on the construction and the definition of the $G$-$\infty$-category of $G$-manifolds and $G$-discs and their framed variants (see \cite{AH19,Mil22}).

\subsection{$G$-$\infty$-categories}

\begin{definition}
Let $T$ be a small $\infty$-category. A $T$-parametrized or $T$-$\infty$-category is a coCartesian fibration
\begin{equation*}
p: C\rightarrow T^{\op}
\end{equation*}
A $T$-parametrized functor, or $T$-functor, between two $T$-$\infty$-categories $p:C\rightarrow T^{\op}$ and $q:D\rightarrow T^{\op}$ is a functor $f:C\rightarrow D$ over $T^{\op}$ which sends $p$-coCartesian arrows to $q$-coCartesian arrows.

In the case when $p:C\rightarrow T^{\op}$ is a left fibration we say that $C$ is a $T$-space or $T$-$\infty$-groupoid. A $T$-functor between two $T$-spaces is called a $T$-map.
\end{definition}

\begin{example}
\label{prex32}
Let $G$ be a finite group and let $\mathcal{O}_G$ be a full subcategory of the category of $G$-spaces spanned by the transitive $G$-spaces ($G$-orbits). By abuse of notation, we will mark with $\OG$ the nerv $N(\OG)$. We call $\mathcal{O}_G$ the orbit category, and we call $\OG$-$\infty$-category $p:C\rightarrow \OG^{\op}$ simply a $G$-$\infty$-category. We can look at the objects $O\in\OG$ as cosets $G/H$ by choosing a basepoint $x\in O$ with $\operatorname{Stab}_x (O) = H$. Informally, by Lurie's straightening/unstraightening, a $G$-$\infty$-category $p:C\rightarrow \OG^{\op}$ is classified by a functor $\mathcal{C}:\OG^{\op} \rightarrow Cat_{\infty}$ which sends every orbit $G/H$ to the fiber $C_{[G/H]} := p^{-1}(G/H)$.
\end{example}
\begin{definition}
(\cite[1.2 and 2.2]{DN17}) Let $T$ be a small $\infty$-category. We define the $\infty$-category of finite $T$-sets denoted as $F_T$ to be the subcategory of $Fun(T^{\op},\Top)$ spanned by finite coproducts of representables. In the case when $T = \OG$, $F_{\OG} : = F_G$ corresponds to the category of finite $G$-sets.

We say that $T$ is \emph{orbital} if $F_T$ admits all pullbacks and we say that $T$ is \emph{cartesian orbital} if $F_T$ admits all limits.

To add on, we say that an orbital $\infty$-category $T$ is \emph{atomic} if there are no nontrivial retracts in $T$ i.e. every map with a left inverse is an equivalence.
\end{definition}
\begin{example}
Let $G$ be a finite group, then $\OG$ is a cartesian orbital $\infty$-category which is also atomic.

If we take $G$ to be a compact Lie group then $\OG$ is not orbital in general, but the $\infty$-category of transitive $G$-spaces with finite stabilizers is atomic orbital $\infty$-category (see \cite[ex. 1.3]{DN17}). It is cartesian if $G$ is a finite group.

These properties of the orbit category $\OG$ are of great importance since they allow us to define the $G$-symmetric monoidal structure.
\end{example}

We will now go through several examples of $G$-$\infty$-categories that play a role in this paper. Throughout the paper $G$ is taken to be a compact Lie group and $\OG$ is taken to be a full subcategory of $G$-spaces spanned by those transitive $G$-spaces with finite stabilizers:

\addtocontents{toc}{\protect\setcounter{tocdepth}{1}}

\subsection*{Example: $G$-$\infty$-category of $G$-spaces}

In order to define the $G$-$\infty$-category of $G$-spaces, we will first define a category $\OGTop$ whose objects are $G$-maps $X\rightarrow O$ where $X$ is a $G$-$CW$-complex and $O\in\OG$ and whose morphisms are $G$-commutative diagrams
\begin{center}
\begin{tikzcd} [row sep=2em, column sep=2em]
X_1 \arrow[d]\arrow[r] & X_2 \arrow[d] \\
O_1 \arrow[r] & O_2
\end{tikzcd}
\end{center}
where $(X_1\rightarrow O_1),(X_2\rightarrow O_2)\in\OGTop$. We can regard $\OGTop$ as a topological category by taking the space of morphisms to be a subspace of
\[
\Map_{\OGTop} (X_1\rightarrow O_1,X_2\rightarrow O_2) \subseteq \Map_{\Top^G} (X_1,X_2) \times \Map_{\OG}(O_1,O_2)
\]
consisting of those maps such that upper diagram is commutative.

The idea behind this construction is that we can regard $X\rightarrow O$ as an object classifying an $H$-space $X_H$ where we write $O\cong G/H$ with the choice of a basepoint of $O$, and where $X_H$ is the fiber of $X\rightarrow G/H$ over $eH$ (with $e$ being the neutral element of $G$).

Consider the forgetful functor $q:\OGTop\rightarrow\OG$ which, after applying the topological nerve functor becomes
\[
N(q): N(\OGTop)\rightarrow\OG
\]
where, by abuse of notation we write $\OG$ for $N(\OG)$. This functor is a Cartesian fibration where the Cartesian arrows are given by diagrams as above which are pullback squares. Then by dualizing this Cartesian fibration (see \cite{BGN14}) we obtain our $G$-$\infty$-category of $G$-spaces which we will mark with $\TopG$:
\[
((N(\OGTop))^{\vee}\rightarrow \OG^{\op})\cong (\TopG \rightarrow \OG^{\op})
\]
The elements of $\TopG$ can still be written as $G$-maps $X\rightarrow O$ whereas the morphisms between $X_1\rightarrow O_1$ and $X_2\rightarrow O_2$ are represented by diagrams
\begin{center}
\begin{tikzcd} [row sep=2em, column sep=2em]
X_1 \arrow[d] & X\arrow[d]\arrow[l]\arrow[r] & X_2 \arrow[d] \\
O_1 & O_2 \arrow[l]\arrow[r,"="] & O_2
\end{tikzcd}
\end{center}
where the left square is a pullback square. This map is coCartesian just in case when $X\rightarrow X_2$ is an equivalence.

The fiber $\TopG_{[G/H]}$ is equivalent to $N(\Top^G_{/(G/H)})$ which is further equivalent to $N(\Top^H)$ by taking the fiber over $eH$.

\subsection*{Example: $G$-$\infty$-category of finite $G$-sets}

The $G$-$\infty$-category of finite $G$-sets is a coCartesian fibration $\FinG \rightarrow \OG^{\op}$ where $\FinG$ is an $\infty$-category whose objects are written in the form $U\rightarrow O$, where $O$ is an orbit space and where $U$ is a finite $G$-set i.e. $U\in F_G$ where $F_G$ is a finite coproduct completion of transitive $G$-spaces with finite stabilizers. Morphism between two elements of $\FinG$, $U_1\rightarrow O_1$ and $U_2\rightarrow O_2$ can be written in a form of a commutative diagram
\begin{center}
\begin{tikzcd} [row sep=2em, column sep=2em]
U_1\arrow[d] & U \arrow[l]\arrow[d]\arrow[r] & U_2\arrow[d] \\
O_1  & O_2 \arrow[r,"="]\arrow[l] & O_2
\end{tikzcd}
\end{center}
with the left square being a summand inclusion i.e. $O_2 \times_{O_1} U_1 \simeq U \coprod U'$, where $U'$ is a finite $G$-set.

We will say that a morphism in $\FinG$ is \textbf{inert} if the map $U\rightarrow U_2$ is an equivalence, and that it is \textbf{active} if the map $U \rightarrow O_2 \times_{O_1} U_1$ is an equivalence.

\subsection*{Example: $G$-$\infty$-category of $G$-manifolds}

The $G$-$\infty$-category of $G$-manifolds is a coCartesian fibration $\MG \rightarrow \OG^{\op}$ with $\MG$ being the $\infty$-category whose objects are written in the form $p: M\rightarrow O$ where $O$ is an orbit space and where $p$ is an equivariant manifold-bundle map whose fiber is a smooth equivariant manifold of a fixed dimension $n$. The idea of having such objects is in the following: by taking a basepoint $x$ of $O$ we can write $(O,x)\cong G/H$ where $H$ is the stabilizer of $x$. Now $p:M\rightarrow G/H$ encodes an $H$-manifold by taking the fiber of $p$ over $eH$.

A morphism between two objects $M_1\rightarrow O_1$ and $M_2\rightarrow O_2$ can be written in a form of a commutative diagram
\begin{center}
\begin{tikzcd} [row sep=2em, column sep=2em]
M_1 \arrow[d] & M \arrow[l]\arrow[r]\arrow[d] &  \arrow[d] M_2 \\
O_1 & O_2\arrow[l] \arrow[r,"="] & O_2
\end{tikzcd}
\end{center}
such that the left square is a pullback square. This morphism is coCartesian just in case the right square is a $G$-isotopy equivalence. Additionally, in any square in the upper diagram of the form
\begin{center}
\begin{tikzcd} [row sep=2em, column sep=2em]
M_1 \arrow[d]\arrow[r] &  \arrow[d] M_2 \\
O_1 \arrow[r] & O_2
\end{tikzcd}
\end{center}
The induced map $M_1\rightarrow O_1\times_{O_2} M_2$ is fiberwise a (smooth) $G$-embedding map.

The benefit of having such $G$-$\infty$-category is that it is relatively easy to describe the functors of restriction and topological induction. Let $K\leq H\leq G$ be finite subgroups. The restriction functor
\[
\Res^H_K : \Mfld^H\simeq \MG_{[G/H]} \rightarrow \MG_{[G/K]}\simeq \Mfld^K
\]
sends $M\rightarrow G/H$ to $N\rightarrow G/K$ where the manifold $N$ is obtained as a pullback of the following diagram
\begin{center}
\begin{tikzcd} [row sep=2em, column sep=2em]
M \arrow[d] & N \arrow[l]\arrow[d] \\
G/H & G/K\arrow[l]
\end{tikzcd}
\end{center}
Let $M_H$ be the fiber of $M\rightarrow G/H$ over $eH$ and let $N_K$ be the fiber of $N\rightarrow G/K$ over $eK$. Forgetting the action, we have $M_H\cong N_K$ since the upper diagram is a pullback square. Additionally, $M_H$ is equipped with $H$-action while $N_K$ is equipped with $K$-action. Since all of the maps in the upper diagram are $G$-equivariant, we see that $N_K$ is equivalent to $M_H$ with restricted $K$-action.

To add on, the topological induction $H \times_K N_K$ (where $N_K$ is the fiber of $N\rightarrow G/K$ over $eK$) is simply given by post composition $N\rightarrow G/K\rightarrow G/H$.

\subsubsection{$G$-symmetric monoidal $\infty$-categories}

\begin{definition}
\label{gsymdef}
(see \cite[section 3.1]{DN17} or \cite[B.0.10]{AH19}) A $G$-symmetric monoidal $G$-$\infty$-category is a coCartesian fibration $\Cs\rightarrow\FinG$ such that for every finite pointed $G$-set $J = (U\rightarrow O)\in\FinG$ we have an equivalence
\[
\underset{W\in \Orbit(U)}{\prod} (\chi_{[W\subset U]})_{!} : \Cs_{[J]} \rightarrow \underset{W\in \Orbit(U)}{\prod} \Cs_{[I(W)]}
\]
where $I(W) = (W\xrightarrow{=}W)\in\FinG$, and where $\Cs_{[J]}$ and $\Cs_{[I(W)]}$ are the corresponding fibers. The functor $(\chi_{[W\subset U]})_! : \Cs_{[J]} \rightarrow \Cs_{[I(W)]}$ is a pushforward functor associated to the following inert map in $\FinG$:
\begin{center}
\begin{tikzcd} [row sep=2em, column sep=2em]
U\arrow[d] &  W \arrow[l]\arrow[r,"="]\arrow[d,"="] & W \arrow[d,"="] \\
O & W\arrow[l]\arrow[r,"="] & W
\end{tikzcd}
\end{center}
\end{definition}

\begin{remark}
\label{undergsym}
Given a $G$-symmetric monoidal $G$-$\infty$-category $\Cs\rightarrow\FinG$ we can define its underlying $G$-$\infty$-category $\Cic := \Cs_{I(-)}$ which fits into the pullback square
\begin{center}
\begin{tikzcd} [row sep=2em, column sep=2em]
\Cs_{I(-)} \arrow[d]\arrow[r] &  \Cs \arrow[d] \\
\OG^{\op} \arrow[r,"I(-)"] & \FinG
\end{tikzcd}
\end{center}
\noindent where $I: \OG\rightarrow\FinG$ is the functor from \ref{gsymdef}.
\end{remark}

\subsection*{Example: $G$-symmetric monoidal category of $G$-manifolds}

We can equip the $G$-$\infty$-category of $G$-manifolds with a $G$-symmetric monoidal structure by considering the disjoint union of $G$-manifolds. Given two objects $M_1\rightarrow O_1$ and $M_2\rightarrow O_2$, their disjoint union can be written as $M_1 \sqcup M_2 \rightarrow O_1 \coprod O_2$, hence the motivation for the following definition.

The $G$-symmetric monoidal category of $G$-manifolds is a coCartesian fibration $\MGs\rightarrow\FinG$ with $\MGs$ being an $\infty$-category whose objects are equivariant manifold-bundle maps in the form $M\rightarrow U\rightarrow O$ where $U$ is a finite $G$-set. A map between two such objects $M_1\rightarrow U_1\rightarrow O_1$ and $M_2\rightarrow U_2\rightarrow O_2$ can be written in a form of a commutative diagram
\begin{center}
\begin{tikzcd} [row sep=3em, column sep=3em]
M_1 \arrow[d] & M \arrow[r]\arrow[l]\arrow[d] & M_2 \arrow[d] \\
U_1 \arrow[d] & U \arrow[r]\arrow[l]\arrow[d] & U_2 \arrow[d] \\
O_1 & O_2 \arrow[l]\arrow[r,"="] & O_2
\end{tikzcd}
\end{center}
where the left side is equivalent to a pullback over a summand inclusion. This morphism is coCartesian just in case when the right side is equivalent to the identity of manifolds. The underlying $G$-$\infty$-category of $\MGs$ will be $\MG$.

\subsubsection{$G$-$\infty$-operads}

\begin{definition}
\label{defgop}
(\cite[def. 3.1]{DN17}) A $G$-$\infty$-operad is an inner fibration $p:E^{\otimes}\rightarrow \OG^{\op}$ satisfying the following conditions:
\begin{itemize}
\item For every inert edge $e:J_1\rightarrow J_2$ in $\FinG$ and every $x\in E^{\otimes}_{[J_1]}$ there exists a coCartesian lift $\widetilde{e}:x\rightarrow y$ over $e$.

\item For any $J=[U\rightarrow O] \in\FinG$ and any choice of pushforward functors along inert edges, we have an equivalence
\[
\underset{W\in Orbit(U)}{\prod} (\chi_{[W\subseteq U]})_{!} : E^{\otimes}_{[J]} \xrightarrow{\simeq} \underset{W\in Orbit(U)}{\prod} E^{\otimes}_{I(W)}
\]

\item For any choice of pushforward functors along inert morphisms and for every map $e:J_1=[U_1\rightarrow O_1]\rightarrow J_2=[U_2\rightarrow O_2]$ in $\FinG$ and every $x\in O^{\otimes}_{[J_1]}$ and $y\in O^{\otimes}_{[J_2]}$ the map
\[
\Map^e_{E^{\otimes}} (x,y) \rightarrow \underset{W\in \Orbit(U_2)}{\prod} \Map^{\chi_{[W\subseteq U]}\circ e}_{E^{\otimes}} (x,(\chi_{[W\subseteq U]})_{!} y)
\]
is an equivalence where $\Map^e_{E^{\otimes}} (x,y)$ is the fiber over $e$ of the map $\Map_{E^{\otimes}} (x,y) \rightarrow \Map_{\FinG} (J_1,J_2)$.
\end{itemize}
An arrow $f:x\rightarrow y$ of a $G$-$\infty$-operad $p:E^{\otimes}\rightarrow\FinG$ is called \textbf{inert} if it is $p$-coCartesian and if $p(f)$ is inert in $\FinG$. Let $p:E^{\otimes}\rightarrow\FinG$ and $q:E'^{\otimes}\rightarrow\FinG$ be two $G$-$\infty$-operads and let $F:E^{\otimes}\rightarrow E'^{\otimes}$ be a map over $\FinG$. We say that $F$ is a map of $G$-$\infty$-operads if $F$ sends inert edges to inert edges.
\end{definition}

\begin{remark}
Every $G$-symmetric monoidal $\infty$-category is a $G$-$\infty$-operad. Moreover, just as in \ref{undergsym} we can define the underlying $G$-$\infty$-category of a $G$-$\infty$-operad to be the pullback
\begin{center}
\begin{tikzcd} [row sep=2em, column sep=2em]
E^{\otimes}_{I(-)} \arrow[d]\arrow[r] &  E^{\otimes} \arrow[d] \\
\OG^{op} \arrow[r,"I(-)"] & \FinG
\end{tikzcd}
\end{center}
We will usually mark with $E$ the underlying $G$-$\infty$-category ${E^{\otimes}}_{I(-)}$.
\end{remark}

\begin{example}
Every $G$-symmetric monoidal category is a $G$-$\infty$-operad.
\end{example}

\begin{definition}
Let $p:E^{\otimes}\rightarrow\FinG$ and $q:E'^{\otimes}\rightarrow\FinG$ be two $G$-$\infty$-operads and let $F:E^{\otimes}\rightarrow E'^{\otimes}$ be a map of $G$-$\infty$-operads. We will call $F$ an $E^{\otimes}$-algebra in $E'^{\otimes}$. Additionally, we will mark with $\Alg_{G,E^{\otimes}}(E'^{\otimes})$ or $\Alg_G (E^{\otimes},E'^{\otimes})$ the $\infty$-category of $E^{\otimes}$-algebras in $E'^{\otimes}$.
\end{definition}

\subsection*{Example: $G$-representations}

Consider $\RG$ the full $G$-$\infty$-subcategory of $\MG$ spanned by the $G$-vector bundle maps $E\rightarrow O$. Note that one such objects of the form $E\rightarrow G/H$ encodes an $H$-representation by taking the fiber over $eH$.

Now let $\RGs$ be the full $G$-$\infty$-subcategory of $\MGs$ on objects of $\RG$ i.e. the objects of $\RGs$ will be of the form $E\rightarrow U\rightarrow O$ where $E\rightarrow U$ is a $G$-vector bundle, with $U$ being a finite $G$-set (more on this will be said in \ref{gdiscs}). The map $\RGs\rightarrow\FinG$ is a $G$-$\infty$-operad.

The categories $\RG$ and $\RGs$ will be closely related to $G$-discs as we shall see in the following section.

\addtocontents{toc}{\protect\setcounter{tocdepth}{2}}

\subsection{$G$-discs}
\label{gdiscs}
Consider a map $U\rightarrow O$ where $O\in \OG$ and where $U$ is a finite $G$-set. Note that this map is a covering map. Therefore for a $G$-vector bundle $E\rightarrow U$ the composite map $E\rightarrow U\rightarrow O$ is a $G$-manifold bundle, hence, an element of $\MG$.
\begin{definition}
Define a $G$-disc to be a vector bundle $E\rightarrow O$ of rank $n$ where $O\in\OG$. Denote with $\DG\subset\MG$ the full $G$-$\infty$-subcategory spanned by objects of the form $E\rightarrow U\rightarrow O$ where $U$ is a finite $G$-set and $E\rightarrow U$ is a $G$-vector bundle.
\end{definition}
Similar to $G$-manifolds the $G$-$\infty$-category of $G$-discs can be endowed with the $G$-symmetric monoidal structure
\begin{definition}
Let $\DGs \subset \MGs$ be the full subcategory spanned by those elements  equivalent to $E\rightarrow U\rightarrow V \rightarrow O$ where $U, V$ are finite $G$-sets, $O\in\OG$ and where $E\rightarrow U$ is a $G$-vector bundle. We will say that $\DGs$ is the $G$-symmetric monoidal category of $G$-discs.
\end{definition}

\addtocontents{toc}{\protect\setcounter{tocdepth}{1}}

\subsubsection{Framing structure}

By a framing structure on a $G$-manifold we mean a tangential structure induced by a framing map. To be more precise, let $B$ be a $G$-space and let $f: B\rightarrow \BOG$ be a $G$-map. The $f$-framing (or $B$-framing) on a $G$-manifold $M$ is given by a $G$-map $f_M: M\rightarrow B$ together with a $G$-homotopy commutative diagram
\begin{center}
\begin{tikzcd} [row sep=2em, column sep=2em]
 & B \arrow[dr,"f"] & \\
M \arrow[ur,"f_M"]\arrow[rr,"\tau_M"] &  & \BOG
\end{tikzcd}
\end{center}
where $M\xrightarrow{\tau_M}\BOG$ is the tangent bundle classifier map. The most important example is when $B$ is taken to be a point, $B = \ast$. This framing corresponds to the diagram
\begin{center}
\begin{tikzcd} [row sep=2em, column sep=2em]
TM \arrow[r]\arrow[d] & V \arrow[d]\arrow[r] & \textrm{EO}_n(G) \arrow[d] \\
M \arrow[r,"f_M"] & \ast \arrow[r,] & \BOG
\end{tikzcd}
\end{center}
where $V$ is a $G$-representation. Hence, this $V$-framing, corresponds to the trivialization of the tangent vector bundle $TM\cong V\times M$.

One can obtain a $G$-$\infty$-category of $f$-framed $G$-manifolds $\MGB$ (see \cite{AH19} or \cite{Mil22}), from which we define the $G$-$\infty$-category of $f$-framed $G$-discs $\DGB$ as a pullback
\begin{center}
\begin{tikzcd} [row sep=3em, column sep=3em]
\DGB \arrow[r]\arrow[d] & \MGB \arrow[d] \\
\DG \arrow[r,hookrightarrow] & \MG
\end{tikzcd}
\end{center}
Additionally, both categories $\MGB$ and $\DGB$ can be endowed with the $G$-symmetric monoidal structure, which we will mark as $\MGBs$ and $\DGBs$ respectively.

\subsubsection{Description of $G$-discs}
\label{framingongdiscs}
By definition of a $G$-disc $E\rightarrow U\rightarrow O$, the map $E\rightarrow U$ is a $G$-vector bundle, hence, we can write this $G$-disc in the form
\begin{center}
\begin{tikzcd} [row sep=2em, column sep=3em]
U \arrow[r]\arrow[d] & \BOG \\
O &
\end{tikzcd}
\end{center}
where the horizontal map is the bundle classifying map. Consequently, we can think of an $f$-framed $G$-disc (where $f:B\rightarrow\BOG$ is a $G$-map) in a form of
\begin{center}
\begin{tikzcd} [row sep=2em, column sep=3em]
U \arrow[r]\arrow[d] & B \\
O &
\end{tikzcd}
\end{center}
where the horizontal map is now the framing map. Note that in both cases we can write these $G$-discs as $U\rightarrow O\times \BOG$ and $U\rightarrow O\times B$ respectively.
\begin{definition}
Let $\Cs$ be a $G$-symmetric monoidal $G$-$\infty$-category, let $B$ be a $G$-space and let $f:B\rightarrow\BOG$ be a $G$-map. Define the $\infty$-category of \emph{$f$-framed $G$-disc algebras with values in $\Cs$} to be the $\infty$-category of $G$-symmetric monoidal functors $\FunGs (\DGBs,\Cs)$.
\end{definition}
In the case when $B=\ast$, the map $\ast\rightarrow\BOG$ corresponds to the $n$-dimensional $G$-representation $V$. We will call $\FunGs (\underline{Disk}^{G,V-fr,\sqcup},\Cs)$ the $\infty$-category of $V$-framed $G$-disc algebras with coefficients in $\Cs$.

\subsubsection{$G$-monoidal envelope}

The most important property of the $G$-$\infty$-category of (framed) $G$-discs is that it is equivalent to the $G$-symmetric monoidal envelope of the $G$-$\infty$-operad $\RGBs$. The results can be summed up by the following proposition.
\begin{proposition}
\label{genv}
(\cite[3.7.4]{AH19} or \cite[7.2.5]{Mil22}) Let $B$ be a $G$-space and let $f:B\rightarrow\BOG$ be a $G$-map. Then the $G$-symmetric monoidal $\infty$-category of $f$-framed $G$-discs $\DGBs$ is equivalent to $\Env_G (\RGBs)$, the $G$-symmetric monoidal envelope of $\RGBs$.
\end{proposition}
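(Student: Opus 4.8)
The plan is to realize the equivalence as the comparison functor supplied by the universal property of the $G$-symmetric monoidal envelope, and then to verify that this functor is an equivalence fiberwise over the orbit category. Recall that $\Env_G$ is by construction left adjoint to the forgetful functor from $G$-symmetric monoidal categories to $G$-$\infty$-operads, so that for every $G$-symmetric monoidal category $\Cs$ there is a natural equivalence $\FunGs(\Env_G(\RGBs),\Cs)\simeq\Alg_G(\RGBs,\Cs)$ (the parametrized analogue of \cite[2.2.4]{HA}, as in \cite{NS22}). There is an evident map of $G$-$\infty$-operads $\iota\colon\RGBs\to\DGBs$ regarding a framed $G$-vector bundle $E\to O$ as a single framed $G$-disc; by the universal property $\iota$ corresponds to a $G$-symmetric monoidal functor $\Phi\colon\Env_G(\RGBs)\to\DGBs$, and the assertion is that $\Phi$ is an equivalence.

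Since a map of $G$-symmetric monoidal categories is an equivalence as soon as it is an equivalence of underlying $G$-$\infty$-categories (the monoidal data on either side being rigidified by the Segal equivalences $\Cs_{[J]}\simeq\prod_{W}\Cs_{[I(W)]}$ of Definition \ref{gsymdef}), and since a functor over $\OG^{\op}$ preserving coCartesian edges is an equivalence exactly when it is a fiberwise equivalence, it suffices to check that for every finite subgroup $H\leq G$ the induced functor on fibers $\Phi_{[G/H]}$ is an equivalence of $\infty$-categories. Here I would first invoke the explicit active-morphism model of the envelope: the underlying $G$-$\infty$-category of $\Env_G(\RGBs)$ has as objects over an orbit the objects of $\RGBs$, i.e. finite-$G$-set-indexed families of framed $G$-vector bundles, equivalently disjoint unions of framed $G$-discs, which are precisely the objects underlying $\DGBs$; hence $\Phi$ is essentially surjective on each fiber.

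The crux is full faithfulness, i.e. the comparison of mapping spaces. Under the active-morphism model a morphism in $\Env_G(\RGBs)$ is a choice of underlying map of finite $G$-sets together with, componentwise, a multimorphism of the operad $\RGBs$, and I must match this with a framed $G$-embedding in $\DGBs$. After the decomposition over $\FinG$ this reduces to identifying the space of framed $G$-embeddings of a finite-$G$-set's worth of framed discs $\bigsqcup_{k} D\hookrightarrow D$ into a single framed disc with the corresponding operadic operation space of $\RGBs$. The genuine work lies precisely here: one argues, via equivariant isotopy extension together with the contraction of a framed embedding of discs onto its derivative datum at the centers (the equivariant Gauss map), that this embedding space is $G$-equivariantly homotopy equivalent to the relevant configuration-plus-framing space, which is by construction the multimorphism space of $\RGBs$. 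I expect this equivariant mapping-space identification—carried out so that the $G$-action and the naturality in the orbit $O$ are tracked and the fiberwise equivalences assemble into a genuine $G$-functor—to be the main obstacle. Granting it, $\Phi$ is fiberwise fully faithful and essentially surjective, hence an equivalence of $G$-symmetric monoidal categories.
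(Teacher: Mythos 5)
You should first note that the paper itself does not prove this proposition: it is quoted with citations to \cite[3.7.4]{AH19} and \cite[7.2.5]{Mil22}, so the comparison must be with those arguments. Your formal skeleton—use the universal property of $\Env_G$ (left adjoint to the forgetful functor, as in \cite{NS22}) to produce $\Phi\colon \Env_G(\RGBs)\to\DGBs$ from the inclusion $\RGBs\hookrightarrow\DGBs$, reduce to the underlying $G$-$\infty$-categories via the Segal equivalences, then check fiberwise essential surjectivity and full faithfulness—is the standard one and is consistent with how the cited proofs are organized. However, the step you single out as ``the genuine work'' rests on a mischaracterization of $\RGBs$. In this paper $\RGBs$ is not an abstract operad whose multimorphism spaces are ``by construction'' configuration-plus-framing spaces: it is defined as the full $G$-$\infty$-subcategory of $\MGBs$ spanned by the objects $E\to U\to O$ with $E\to U$ a $G$-vector bundle. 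Consequently its multimorphism spaces are \emph{already} spaces of framed $G$-embeddings of disjoint unions of discs into discs—the very spaces you propose to compare them with. The scanning/Gauss-map argument via equivariant isotopy extension is the content of a different theorem (the identification of disc embedding spaces with equivariant configuration spaces, relevant e.g.\ to nonabelian Poincar\'e duality) and is neither needed here nor available ``by construction''; as written, your crux step starts from a false premise about how $\RGBs$ is built.

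What actually carries the weight in the cited proofs is the part your proposal treats as routine: the explicit active-arrow model of the parametrized envelope, $\Env_G(\RGBs)\simeq \RGBs\times_{\FinG}\operatorname{Ar}^{\mathrm{act}}(\FinG)$, and the combinatorial matching of this fiber product with the definition of $\DGBs$, whose objects $E\to U\to V\to O$ carry exactly one more layer of finite $G$-set (the map $U\to V$, which records the active arrow) than the objects $E\to U\to O$ of $\RGBs$. Once this is set up, the comparison of mapping spaces is essentially bookkeeping with spans of embeddings, since both sides are built inside $\MGs$ (respectively $\MGBs$) and the framing data pulls back along the same squares; no contraction of embedding spaces onto derivative data occurs anywhere. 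So the correct assessment is: your overall strategy is fine, your essential-surjectivity argument is fine, but the full-faithfulness step as you describe it would not be carried out this way—the hard point you anticipate does not arise, while the point that does require care (the equivariant analogue of the active-arrow description of the envelope and its match with the two-layer objects of $\DGBs$) is exactly where your write-up says nothing beyond asserting the object-level match.
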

\begin{corollary}
(\cite[7.2.6]{Mil22}) Let $\Cs$ be a $G$-symmetric monoidal $\infty$-category, let $B$ be a $G$-space and let $f:B\rightarrow\BOG$ be a $G$-map. Then there is an equivalence of $\infty$-categories
\[
\FunGs (\DGBs,\Cs)\xrightarrow{\simeq} \Alg_G (\RGBs,\Cs)
\]
\end{corollary}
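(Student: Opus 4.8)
The plan is to derive this equivalence formally from Proposition~\ref{genv} together with the defining universal property of the $G$-symmetric monoidal envelope $\Env_G$. Recall that $\Env_G$ is left adjoint to the forgetful functor from $G$-symmetric monoidal $\infty$-categories (with $G$-symmetric monoidal functors as morphisms) to $G$-$\infty$-operads (with maps of $G$-$\infty$-operads as morphisms). Unwinding this adjunction, for every $G$-$\infty$-operad $\mathcal{O}^{\otimes}$ there is a unit map of $G$-$\infty$-operads $\eta:\mathcal{O}^{\otimes}\rightarrow\Env_G(\mathcal{O}^{\otimes})$, and for every $G$-symmetric monoidal category $\Cs$ restriction along $\eta$ furnishes a natural equivalence
\[
\eta^{\ast}:\FunGs(\Env_G(\mathcal{O}^{\otimes}),\Cs)\xrightarrow{\simeq}\Alg_G(\mathcal{O}^{\otimes},\Cs).
\]
This is the parametrized analogue of the universal property of the symmetric monoidal envelope; fiberwise over $\OG^{\op}$ it specializes to the non-parametrized statement.

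First I would record this parametrized adjunction precisely, either by citing the parametrized operadic framework or by checking the unit and counit directly. Then I would set $\mathcal{O}^{\otimes}=\RGBs$ and invoke Proposition~\ref{genv}, which provides an equivalence $\DGBs\simeq\Env_G(\RGBs)$ of $G$-symmetric monoidal $\infty$-categories. Substituting this into the displayed equivalence yields
\[
\FunGs(\DGBs,\Cs)\simeq\FunGs(\Env_G(\RGBs),\Cs)\xrightarrow{\simeq}\Alg_G(\RGBs,\Cs),
\]
which is exactly the asserted equivalence.

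The main point requiring care --- and the only step beyond the formal adjunction --- is a compatibility check: I must ensure that the equivalence of Proposition~\ref{genv} is an equivalence \emph{under} $\RGBs$, that is, that it carries the canonical inclusion of $\RGBs$ as the underlying $G$-$\infty$-operad of $\DGBs$ to the unit $\eta$ of the envelope. Only then does restriction along the operad inclusion agree with restriction along $\eta$, so that the composite of the two equivalences is precisely the map induced by $\RGBs\hookrightarrow\DGBs$. I expect the construction underlying Proposition~\ref{genv} to already produce the equivalence in this refined form, so the remaining work is to trace through that construction and confirm the compatibility with the unit; once this is in place the corollary is immediate.
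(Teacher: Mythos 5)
Your proposal is correct and takes essentially the same route as the paper, which states this corollary as an immediate consequence of Proposition~\ref{genv} combined with the universal property of the $G$-symmetric monoidal envelope $\Env_G$ as left adjoint to the forgetful functor to $G$-$\infty$-operads (the proof itself being deferred to \cite[7.2.6]{Mil22}). The compatibility you flag --- that the equivalence $\DGBs\simeq\Env_G(\RGBs)$ of Proposition~\ref{genv} is an equivalence under $\RGBs$, identifying the operad inclusion with the unit of the adjunction --- is indeed the one point requiring care, and it is built into the cited construction of the envelope equivalence.
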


\addtocontents{toc}{\protect\setcounter{tocdepth}{2}}

\section{Universal property of framed $G$-disc algebras}

\subsection{$G$-approximations to $G$-$\infty$-operads}

In this section we will present the theory of $G$-approximations to $G$-$\infty$-operads. Informally, a $G$-approximation to a $G$-$\infty$-operad is an $\infty$-category that somewhat behaves as a $G$-$\infty$-operad. It is a more general object that can capture information about the original $G$-$\infty$-operad.

\begin{definition}
\label{defgapprox}
Let $p:E^{\otimes}\rightarrow\FinG$ be a $G$-$\infty$-operad and let $f:C\rightarrow E^{\otimes}$ be a categorical fibration. We say that $f$ is a \emph{$G$-approximation} to $E^{\otimes}$ if it satisfies the following conditions:
\begin{enumerate}
\item Let $p'=p\circ f$. The $\infty$-category $C_{I(-)}$ obtained as the pullback
\begin{center}
\begin{tikzcd} [row sep=2em, column sep=2em]
C_{I(-)} \arrow[d]\arrow[r] & C \arrow[d,"p' "] \\
\OG^{\op} \arrow[r] & \FinG
\end{tikzcd}
\end{center}
together with the map $C_{I(-)}\rightarrow \OG^{\op}$ is a $G$-$\infty$-category i.e. this map is a coCartesian fibration.

\item Let $c\in C$ and let $p'(c) = I = [U\rightarrow O]$. Then there exists a $p'$-coCartesian morphism $c\rightarrow c_W$ in $C$ lifting $\chi_{W\subseteq U}: I\rightarrow I(W)$ for every $W\in \Orbit(U)$. Additionally, the map $f(c\rightarrow c_W)$ is an inert map in $E^{\otimes}$.

\item Let $c\in C$ and let $\alpha: x\rightarrow f(c)$ be an active morphism in $E^{\otimes}$. Then there exists a Cartesian lift $\bar{\alpha}: \bar{x}\rightarrow c$ of $\alpha$ in $C$.
\end{enumerate}
\end{definition}

\begin{definition}
Let $p:E^{\otimes}\rightarrow\FinG$ and $q:E'^{\otimes}\rightarrow\FinG$ be two $G$-$\infty$-operads and let $f:C\rightarrow E^{\otimes}$ be a $G$-approximation. Denote with $p' = p\circ f$ and consider a functor $F:C\rightarrow E'^{\otimes}$. We will say that $F$ is a \emph{$C$-algebra object} in $E'^{\otimes}$ if the following conditions are satisfied:
\begin{itemize}
\item $F$ induces a $G$-functor between the underlying $G$-$\infty$-categories $C_{I(-)}\rightarrow E'$.
\item The following diagram is commutative
\begin{center}
\begin{tikzcd} [row sep=2em, column sep=2em]
C \arrow[d,"f"]\arrow[r,"F"] & E'^{\otimes} \arrow[d,"q"] \\
E^{\otimes} \arrow[r,"p"] & \FinG
\end{tikzcd}
\end{center}
\item Let $x\in C$ and let $p'(x) = [U\rightarrow O]$. Let $\alpha_W : x\rightarrow x_W$ be a $p'$-coCartesian arrow lifting $\chi: [U\rightarrow O] \rightarrow [W\xrightarrow{=}W]$ for $W\in \Orbit(U)$. Then the map $F(\alpha_W)$ is an inert map in $E'^{\otimes}$.
\end{itemize}
We will denote with $\Alg_G(C,E')$ the $\infty$-category of $C$-algebra objects in $E'^{\otimes}$.
\end{definition}
We would like to prove the proposition \ref{ha23323} (the equivariant version of \cite[2.3.3.23]{HA}). The following lemmas will be helpful:
\begin{lemma}
\label{gopinert}
Let $p:E^{\otimes}\rightarrow\FinG$ and $q:E'^{\otimes}\rightarrow\FinG$ be two $G$-$\infty$-operads and let $F:E^{\otimes}\rightarrow E'^{\otimes}$ be a map over $\FinG$. Let $\mathcal{F}$ be a class of arrows in $\FinG$ of type
\begin{center}
\begin{tikzcd} [row sep=2em, column sep=2em]
U \arrow[d] & O_2 \arrow[l]\arrow[r,"="]\arrow[d,"="] & O_2 \arrow[d,"="] \\
O_1 & O_2 \arrow[l]\arrow[r,"="] & O_2
\end{tikzcd}
\end{center}
Then $F$ is a map of $G$-$\infty$-operads if and only if it sends an inert arrow in $E^{\otimes}$ over $\mathcal{F}$ to an inert arrow in $E'^{\otimes}$.
\end{lemma}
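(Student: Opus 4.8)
The plan is to prove both implications, the forward one being immediate from the definition and the real content lying in the converse. If $F$ is a map of $G$-$\infty$-operads then by Definition \ref{defgop} it carries every inert arrow of $E^{\otimes}$ to an inert arrow of $E'^{\otimes}$, in particular those lying over $\mathcal{F}$, so that direction requires nothing. For the converse I would assume that $F$ sends every inert arrow lying over an arrow of $\mathcal{F}$ to an inert arrow, fix an arbitrary inert arrow $\alpha: x\to y$ in $E^{\otimes}$ lying over an inert $e: J_1\to J_2$ with $J_2=[U_2\to O_2]$, and show that $F(\alpha)$ is inert.

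First I would set up the orbit decomposition of the target. For each $W\in\Orbit(U_2)$ write $\chi_W: J_2\to I(W)$ for the characteristic inert projection and choose an inert (coCartesian) lift $\beta_W: y\to y_W$ of $\chi_W$. The composite $\chi_W\circ e: J_1\to I(W)$ is again inert, and since its target $I(W)$ is a single orbit, both $\chi_W$ and $\chi_W\circ e$ belong to the class $\mathcal{F}$. Hence $\beta_W$ and the composite $\gamma_W:=\beta_W\circ\alpha$ (which is coCartesian over $\chi_W\circ e$, being a composite of coCartesian arrows over composable base arrows) are inert arrows lying over $\mathcal{F}$; by hypothesis $F(\beta_W)$ and $F(\gamma_W)=F(\beta_W)\circ F(\alpha)$ are therefore inert in $E'^{\otimes}$.

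The key step is a Segal-type detection principle that I would isolate as a sublemma: for an arrow $\psi: a\to b$ in a $G$-$\infty$-operad lying over an inert $e: J_1\to J_2$ with $J_2=[U_2\to O_2]$, the arrow $\psi$ is coCartesian if and only if for each $W\in\Orbit(U_2)$ the composite $a\xrightarrow{\psi} b\to b_W$ (with $b\to b_W$ an inert pushforward along $\chi_W$) is coCartesian over $\chi_W\circ e$. The ``only if'' direction is again the stability of coCartesian arrows under composition. For the ``if'' direction I would compare $\psi$ with the coCartesian lift $\psi': a\to b'$ of $e$, which exists by the first axiom of Definition \ref{defgop}: there is a unique map $\theta: b'\to b$ in the fiber $E^{\otimes}_{[J_2]}$ with $\theta\circ\psi'\simeq\psi$, and pushing forward along each $\chi_W$ shows that $\theta$ becomes an equivalence after applying every $(\chi_W)_!$. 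Since the Segal functor $E^{\otimes}_{[J_2]}\to\prod_{W\in\Orbit(U_2)} E^{\otimes}_{I(W)}$ of the second axiom of Definition \ref{defgop} is an equivalence, hence conservative, $\theta$ is an equivalence and $\psi$ is coCartesian.

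Applying this sublemma to $\psi=F(\alpha)$ in $E'^{\otimes}$ finishes the proof: since $F(\beta_W)$ is inert it exhibits $F(y_W)$ as the pushforward $(\chi_W)_! F(y)$, so the pushforward of $F(\alpha)$ to $I(W)$ is $F(\beta_W)\circ F(\alpha)=F(\gamma_W)$, which is coCartesian by the second paragraph; the sublemma then yields that $F(\alpha)$ is coCartesian, i.e.\ inert. I expect the main obstacle to be the ``if'' direction of the sublemma, where one must correctly extract the comparison map $\theta$ in the fiber $E^{\otimes}_{[J_2]}$ and invoke conservativity of the Segal equivalence; the remainder is bookkeeping with composites of coCartesian arrows over $\FinG$.
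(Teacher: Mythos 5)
Your proposal is correct and follows essentially the same route as the paper, whose proof simply observes that the characteristic maps $\chi_{[W\subseteq U]}$ lie in $\mathcal{F}$ and defers to the argument of \cite[2.1.2.9]{HA}; your write-up is precisely that argument carried out in the equivariant setting (orbit decomposition via the $\chi_W$, plus the Segal-condition detection of coCartesian arrows through conservativity of the equivalence $\prod_{W}(\chi_W)_!$). Your only addition is to spell out the details the paper leaves to Lurie, and those details, including the verification that $\chi_W\circ e\in\mathcal{F}$ for inert $e$, are sound.
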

\begin{proof}
For a finite $G$-set $I=[U\rightarrow O]$ note that the maps $\chi_{[W\subseteq U]}$ all belong to $\mathcal{F}$. The proof is now analogous to \cite[2.1.2.9]{HA}.
\end{proof}

\begin{lemma}
\label{weakgapprox}
Let $p:E^{\otimes}\rightarrow\FinG$ be a $G$-$\infty$-operad and let $f:C\rightarrow E^{\otimes}$ be a $G$-approximation. Let $c\in C$ and let $\alpha: x\rightarrow f(c)$ be any morphism in $E^{\otimes}$. Consider
\[
\Sigma\subseteq C_{/c} \times_{E^{\otimes}_{/f(c)}} {E^{\otimes}_{x/}}_{/f(c)}
\]
the full subcategory spanned by the objects corresponding to pairs $\{\beta:d\rightarrow c, \gamma: x\rightarrow f(d)\}$ such that $\gamma$ is inert. The $\infty$-category $\Sigma$ is contractible.
\end{lemma}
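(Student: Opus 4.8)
The plan is to produce a terminal object of $\Sigma$ and conclude that $\Sigma$ is contractible. Two ingredients drive the argument: the essentially unique active–inert factorization of morphisms in a $G$-$\infty$-operad $E^{\otimes}$ (the $G$-analogue of the factorization system on an $\infty$-operad of \cite{HA}, which is encoded by the third axiom of Definition \ref{defgop}), and the Cartesian lifting of active morphisms provided by condition (3) of Definition \ref{defgapprox}.

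First I would factor the given morphism $\alpha \colon x \to f(c)$ in $E^{\otimes}$ as
\[
x \xrightarrow{\ \gamma_0\ } y \xrightarrow{\ \delta_0\ } f(c),
\]
with $\gamma_0$ inert and $\delta_0$ active; this factorization exists and is unique up to a contractible space of choices. Since $\delta_0$ is active, condition (3) of Definition \ref{defgapprox} supplies an $f$-Cartesian lift $\bar\delta_0 \colon \bar y \to c$ with $f(\bar\delta_0) = \delta_0$ and $f(\bar y) = y$. Then $\sigma_0 := (\bar\delta_0 \colon \bar y \to c,\ \gamma_0 \colon x \to f(\bar y))$ is an object of $\Sigma$: the map $\gamma_0$ is inert and $f(\bar\delta_0)\circ\gamma_0 = \delta_0\circ\gamma_0 \simeq \alpha$, so it sits in the correct fibre of the defining pullback. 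I claim $\sigma_0$ is terminal.

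To check this, let $\sigma = (\beta \colon d \to c,\ \gamma \colon x \to f(d))$ be an arbitrary object of $\Sigma$. Unwinding the fibre product, a morphism $\sigma \to \sigma_0$ is a map $h \colon d \to \bar y$ with $\bar\delta_0\circ h \simeq \beta$ and $f(h)\circ\gamma \simeq \gamma_0$. By the $f$-Cartesian property of $\bar\delta_0$, the space of maps $h \colon d \to \bar y$ over $\beta$ is equivalent to the space of lifts $\gamma' \colon f(d) \to y$ of $f(\beta)$ along $\delta_0$, under which $f(h) = \gamma'$; imposing the remaining condition identifies $\Map_{\Sigma}(\sigma,\sigma_0)$ with the space of $\gamma' \colon f(d) \to y$ such that $\delta_0\circ\gamma' \simeq f(\beta)$ and $\gamma'\circ\gamma \simeq \gamma_0$. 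Using $\delta_0\circ\gamma_0 \simeq \alpha \simeq f(\beta)\circ\gamma$, this is precisely the space of diagonal fillers in the square with inert left edge $\gamma$ and active right edge $\delta_0$, which is contractible by orthogonality of the (inert, active) factorization system on $E^{\otimes}$. Hence $\Map_{\Sigma}(\sigma,\sigma_0)$ is contractible for every $\sigma$, so $\sigma_0$ is terminal and $\Sigma$ is contractible.

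The hard part is this final contractibility step, which rests on knowing that inert and active morphisms form an orthogonal factorization system on the $G$-$\infty$-operad $E^{\otimes}$; I would isolate this as the central technical point and deduce it from the mapping-space axiom of Definition \ref{defgop}, which is exactly what controls maps out of and into objects in terms of their inert and active parts, parallel to the non-equivariant argument. The remaining bookkeeping — that all composites agree with $\alpha$ up to coherent homotopy, and that condition (3) yields a genuinely $f$-Cartesian rather than merely locally $f$-Cartesian lift — is routine but should be carried out with care.
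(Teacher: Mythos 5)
Your proof is correct and follows essentially the same route as the paper: factor $\alpha$ into an inert map followed by an active one, use condition (3) of Definition \ref{defgapprox} to produce a Cartesian lift of the active part, and exhibit the resulting pair as a terminal object of $\Sigma$. The only difference is that where the paper defers the terminality check to the analogous argument of \cite[2.3.3.10]{HA}, you carry it out explicitly via the Cartesian property of the lift and the orthogonality of the (inert, active) factorization system, which is an expansion of the same argument rather than a deviation from it.
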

\begin{proof}
We can factorize $\alpha$ as in the following commutative diagram
\begin{center}
\begin{tikzcd} [row sep=2em, column sep=2em]
 & y \arrow[rd,"\alpha_2"] & \\
x \arrow[ur,"\alpha_1"]\arrow[rr,"\alpha"] & & f(c)
\end{tikzcd}
\end{center}
such that $\alpha_1$ is inert and $\alpha_2$ is active. By definition of a $G$-approximation, we have a Cartesian lift $\bar{\alpha}_2: \bar{y}\rightarrow c$ of $\alpha_2$ in $C$. Now, analogous to the proof of \cite[2.3.3.10]{HA} we can claim that the pair $\sigma = (\bar{\alpha}_2: \bar{y}\rightarrow c,\alpha_1 : x\rightarrow y = f(\bar{y}))$ is a terminal object of $\Sigma$, hence $\Sigma$ is contractible.
\end{proof}

\begin{remark}
In Lurie's book \cite{HA}, one can find a definition of a \emph{weak approximation} to be a map $C\rightarrow E^{\otimes}$ satisfying the second point from the definition \ref{defgapprox} and the condition from \ref{weakgapprox} (in the non-equivariant case). In this paper, we will stick to the work with (strong) $G$-approximation although one could add the definition of the weak one.
\end{remark}

\begin{proposition}
\label{ha23323}
Let $p:E^{\otimes}\rightarrow\FinG$ and $q:E'^{\otimes}\rightarrow\FinG$ be two $G$-$\infty$-operads and let $f:C\rightarrow E^{\otimes}$ be a $G$-approximation. Assume that the induced map $C_{I(-)}\rightarrow E$ is an equivalence of $G$-$\infty$-categories. Then the induced map
\[
\theta: \Alg_G(E,E') \rightarrow \Alg_G(C,E')
\]
is an equivalence of $\infty$-categories.
\end{proposition}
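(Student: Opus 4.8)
The plan is to adapt Lurie's proof of \cite[2.3.3.23]{HA} to the parametrized setting. First, observe that $\theta$ is the functor induced by precomposition with $f$: from a map of $G$-$\infty$-operads $G \colon E^{\otimes} \to E'^{\otimes}$ one forms $G \circ f \colon C \to E'^{\otimes}$, and condition (2) of Definition \ref{defgapprox} together with the fact that $G$ preserves inert arrows shows that $G \circ f$ satisfies the axioms of a $C$-algebra object. To prove that $\theta$ is an equivalence I would construct an inverse by operadic left Kan extension along $f$ and verify that the resulting unit and counit are equivalences.

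\emph{Construction of the inverse.} Given a $C$-algebra $F \colon C \to E'^{\otimes}$, I would define $\bar F \colon E^{\otimes} \to E'^{\otimes}$ by operadic left Kan extension of $F$ along $f$. The crucial point is that Lemma \ref{weakgapprox} identifies, for each object $x \in E^{\otimes}$ and each morphism $\alpha \colon x \to f(c)$, the indexing category $\Sigma$ of inert factorizations as contractible; indeed its proof exhibits an explicit terminal object arising from the Cartesian lift of the active part of $\alpha$ guaranteed by condition (3) of Definition \ref{defgapprox}. Consequently the operadic colimit computing $\bar F(x)$ is indexed by a category possessing a terminal object, so it is computed simply by evaluating $F$ at that terminal object; no nontrivial colimits in $E'^{\otimes}$ are required, and $\bar F$ is well defined.

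\emph{Operad-map property.} To see that $\bar F$ is a map of $G$-$\infty$-operads, Lemma \ref{gopinert} reduces the claim to checking that $\bar F$ carries the inert arrows lying over the distinguished class $\mathcal{F}$ --- in particular the characteristic maps $\chi_{[W \subseteq U]}$ for $W \in \Orbit(U)$ --- to inert arrows in $E'^{\otimes}$. This follows from condition (2) of Definition \ref{defgapprox}, which provides $p'$-coCartesian lifts of these maps whose images under $f$ are inert, together with the defining property of $F$ as a $C$-algebra, namely that it sends them to inert morphisms. Compatibility of $\bar F$ with the projections to $\FinG$ is immediate from the construction.

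\emph{Unit, counit, and the main obstacle.} The composite $\theta(\bar F) = \bar F \circ f \simeq F$ is an equivalence by the terminal-object computation above, since evaluating the Kan extension at $f(c)$ returns $F(c)$. Conversely, for any $E$-algebra $G$ the extension $\overline{G \circ f}$ agrees with $G$: because $G$ already respects the inert--active factorization of every morphism, it coincides with the operadic left Kan extension of its own restriction. The hypothesis that $C_{I(-)} \to E$ is an equivalence of $G$-$\infty$-categories is precisely what guarantees that the underlying $G$-category of $\bar F$ is identified with that of an honest $E$-algebra, so that $\bar F$ genuinely lands in $\Alg_G(E,E')$. The step I expect to be the main obstacle is the parametrized bookkeeping: one must ensure that all the comma-category and Kan-extension arguments respect the coCartesian fibrations over $\FinG$ and $\OG^{\op}$, and in particular that the operadic (rather than ordinary) colimits are correctly identified with evaluation at terminal objects fiberwise over every orbit. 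Verifying the operad-map condition for $\bar F$ uniformly in the $G$-parameter, rather than merely pointwise, is the most delicate point.
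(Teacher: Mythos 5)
There is a genuine gap here: your construction of the inverse runs the Kan extension in the wrong direction, and the terminal objects you invoke do not exist where you need them. Lemma \ref{weakgapprox} concerns factorizations of morphisms $\alpha\colon x\to f(c)$, i.e.\ maps \emph{into} the image of $f$; it is the statement that makes the inclusion $C'_{x/}\hookrightarrow C_{x/}$ of inert factorizations final, and it is fed by condition (3) of Definition \ref{defgapprox}, which gives Cartesian lifts of active maps $x\to f(c)$. This is exactly the input for a $q$-\emph{right} Kan extension, computed over the under-categories $C_{x/}$. A left Kan extension of $F$ along $f$ would instead be computed over the over-categories $C\times_{E^{\otimes}}E^{\otimes}_{/x}$ (objects $c$ with maps $f(c)\to x$), about which Lemma \ref{weakgapprox} says nothing; these have no terminal object in general, because $f$ is only an equivalence on underlying $G$-$\infty$-categories and need not be essentially surjective on all of $E^{\otimes}$ (in the paper's application $\theta\colon\DHV\rightarrow\DGH$, the image consists of objects $O\leftarrow U\rightarrow G/H$ whose framing map factors through the structure map, so a general object over $[U\to O]$ is not hit). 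Consequently your $\bar F(x)$ would require genuine operadic colimits, and none are available: $E'^{\otimes}$ is a bare $G$-$\infty$-operad with no colimit hypotheses. For the same reason your counit step fails: restricting a left Kan extension along a functor that is neither fully faithful nor essentially surjective need not recover $F$. Note also that the correct formula for an algebra's value over $[U\to O]$ is a $q$-\emph{product} of its values on the orbit pieces $x_W$ — a limit-type, not colimit-type, description — which is another sign the extension must be a right one.

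The paper's proof (following \cite[2.3.3.23]{HA}) avoids this by replacing $f$ with a Cartesian fibration $M\rightarrow\Delta^1$ whose ends are $E^{\otimes}$ and $C$, together with a retraction $r$ with $r|_C=f$. The half where terminal objects genuinely appear is the \emph{easy} half: the categories $E^{\otimes}_{/x}=E^{\otimes}\times_M M_{/x}$ have terminal objects supplied by the Cartesian edges of $M$, which shows the restriction $\Upsilon\rightarrow\Alg_G(E,E')$ is a trivial Kan fibration with section $s$ given by precomposition with $r$. The substantive half identifies $\Upsilon\simeq\Alg_G(C,E')$ via $q$-right Kan extensions from $C$ to $M$, using Lemma \ref{weakgapprox} to pass from $C_{x/}$ to $C'_{x/}$, a further reduction to the subcategory $C''_{x/}$ of orbitwise inert maps, and the $G$-$\infty$-operad structure of $E'^{\otimes}$ to produce the required $q$-limits as $q$-products; the hypothesis $C_{I(-)}\simeq E$ enters precisely there, to write $x_W\simeq f(c_W)$ and obtain initial objects of the pieces $C''(W)_{x/}$. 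The resulting characterization — $F$ is a $q$-right Kan extension of $F_0$ if and only if each $F(\alpha_W)$ is inert (the paper's help lemma) — is what then verifies, via Lemma \ref{gopinert}, that such $F$ restricts to an operad map on $E^{\otimes}$. If you want to keep an extension-style argument, it is this right Kan extension along the correspondence that you should construct, not an operadic left Kan extension along $f$.
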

\begin{proof}
First, we can choose a Cartesian fibration $u:M\rightarrow \Delta^1$ associated to the functor $f$, together with isomorphisms $M\times_{\Delta^1} \{0\}\simeq E^{\otimes}$, $M\times_{\Delta^1} \{1\}\simeq C$ and a retraction $r:M\rightarrow E^{\otimes}$ such that $r|_{C} = f$. Denote with $\Upsilon \subseteq \Fun_{\FinG} (M,E')$ the full subcategory spanned by the functors $F:M\rightarrow E'$ over $\FinG$ such that:
\begin{enumerate}
\item The restriction $F|_{E^{\otimes}}$ belongs to $\Alg_G(E,E')$,
\item For every $u$-Cartesian morphism $\alpha$ of $M$, the image $F(\alpha)$ is an equivalence in $E'$ (or, equivalently, $F$ is a $q$-left Kan extension of $F|_{E^{\otimes}}$).
\end{enumerate}
We will continue the proof and complete it in several steps.
\begin{step}
The restriction map $\Upsilon\rightarrow \Alg_G(E,E')$ is a trivial Kan fibration.
\end{step}
Let $\mathcal{X}\subseteq \Fun_{\FinG}(M,E'^{\otimes})$ be the full subcategory spanned by those functors $F:M\rightarrow E'^{\otimes}$ which are $q$-left Kan extensions of $F|_{E^{\otimes}}$, and let $\mathcal{Y}\subseteq \Fun_{\FinG} (E^{\otimes},E'^{\otimes})$ be the full subcategory spanned by those functors $F:E^{\otimes}\rightarrow E'^{\otimes}$ such that for every $x\in M$ the induced functor $E^{\otimes}_{/x}\rightarrow E'^{\otimes}$ has a $q$-colimit, where $E^{\otimes}_{/x} := E^{\otimes}\times_M M_{/x}$. By \cite[4.3.2.15]{HTT} the restriction functor $\mathcal{X}\rightarrow\mathcal{Y}$ is a trivial Kan fibration. Moreover, we claim that $\mathcal{Y}\simeq \Fun_{\FinG} (E^{\otimes},E'^{\otimes})$. This follows from the fact that $E^{\otimes}_{/x}$ has a terminal object, hence every functor in $\Fun_{\FinG} (E^{\otimes},E'^{\otimes})$ has a $q$-colimit. The terminal object in $E^{\otimes}_{/x}$ is given by $x\rightarrow x$ if $x\in M|_{\{0\}}\simeq E^{\otimes}$ and by a Cartesian lift $y\rightarrow x$ over $x$ if $x\in M|_{\{1\}}\simeq C$. By the description of the $\infty$-category $\Upsilon$ the restriction functor $\Upsilon\rightarrow \Alg_G(E,E')$ fits in the commutative diagram
\begin{center}
\begin{tikzcd} [row sep=2em, column sep=2em]
\Upsilon \arrow[d]\arrow[r,hookrightarrow] & \mathcal{X} \arrow[d]\arrow[r] & \Fun_{\FinG}(M,E') \arrow[d] \\
\Alg_G(E,E') \arrow[r,hookrightarrow] & \mathcal{Y} \arrow[r,"\simeq"] & \Fun_{\FinG} (E,E')
\end{tikzcd}
\end{center}
In fact, the left square is a pullback square, again by the description of $\Upsilon$, hence $\Upsilon\rightarrow \Alg_G(E,E')$ is a trivial Kan fibration.

Going toward our next step, note that precomposition with $r$ gives a section of this trivial fibration, which we will traditionally mark with $s$. Let $\epsilon: \Upsilon\rightarrow \Fun_{\FinG}(C,E'^{\otimes})$ be the other restriction. The functor $\theta$ is given by the composition $\epsilon\circ s$, which means that it will suffice to prove that $\epsilon$ induces an equivalence of $\infty$-categories $\Upsilon$ and $\Alg_G(C,E'^{\otimes})$.

Again, by \cite[4.3.2.15]{HTT} it will suffice to show the following:
\begin{enumerate}
\item for every $F_0\in \Alg_G(C,E')$ there exists $F\in \Fun_{\FinG}(M,E'^{\otimes})$ such that $F$ is a $q$-right Kan extension of $F_0$;
\item a functor $F\in \Fun_{\FinG}(M,E'^{\otimes})$ belongs to $\Upsilon$ if and only if it is a $q$-right Kan extension of $F_0 = F|_C$, with $F_0\in \Alg_G(C,E')$.
\end{enumerate}
Naturally, our next step would be to prove:
\begin{step}
\label{step2}
For every $F_0\in \Alg_G(C,E')$ there exists $F\in \Fun_{\FinG}(M,E'^{\otimes})$ such that $F$ is a $q$-right Kan extension of $F_0$.
\end{step}
Take $x\in E^{\otimes}$, let $C_{x/} = M_{x/}\times_M C$ and let $F_x = F_0 |_{C_{x/}}$. By \cite[4.3.2.13]{HTT} it will suffice to show that $F_x$ can be extended to a $q$-limit diagram $C_{x/}^{\lhd}\rightarrow E'^{\otimes}$ (covering the map $C_{x/}^{\lhd}\rightarrow M\rightarrow \FinG$). Denote with $C'_{x/}$ the full subcategory of $C_{x/}$ spanned by those morphisms $x\rightarrow y$ in $M$ such that $x\rightarrow f(y)$ is an inert arrow in $E^{\otimes}$. By \ref{weakgapprox} the inclusion functor $C'_{x/}\hookrightarrow C_{x/}$ is final, thus by \cite[4.3.1.7]{HTT} it will suffice to show that $F'_{x} = F_{x}|_{C'_{x/}}$ can be extended to a $q$-limit ${C'}_{x/}^{\lhd}\rightarrow E'^{\otimes}$.

Let $p(x) = [U\rightarrow O]$ and let $C''_{x/}$ be the full subcategory of $C'_{x/}$ spanned by inert morphisms $x\rightarrow f(y)$ such that their underlying map in $\FinG$ can be written as a span
\begin{center}
\begin{tikzcd} [row sep=2em, column sep=2em]
U \arrow[d] & V \arrow[d,"="]\arrow[r,"="]\arrow[l] & V\arrow[d,"="] \\
O & V\arrow[r,"="]\arrow[l] & V
\end{tikzcd}
\end{center}
where $V\in\OG$. At the moment, we wish to prove that $F'_x$ is a $q$-right Kan extension of $F''_x = F'_x |_{C''_{x/}}$ so that we can use \cite[4.3.2.7]{HTT}. Let $\alpha : x\rightarrow y$ be a map in $M$ which is an object of $C'_{x/}$ and denote $p\circ f(y) = [U_1\rightarrow O_1]$. The $\infty$-category $C''_{x/} \times_{C'_{x/}} (C'_{x/})_{\alpha /}$ can be identified with the full subcategory of $M_{\alpha /}$ spanned by diagrams $x\xrightarrow{\alpha} y \xrightarrow{\beta} z$ such that $p\circ f(\beta)$ can be written as the span
\begin{center}
\begin{tikzcd} [row sep=2em, column sep=2em]
U_1 \arrow[d] & V \arrow[d,"="]\arrow[r,"="]\arrow[l] & V\arrow[d,"="] \\
O_1 & V\arrow[r,"="]\arrow[l] & V
\end{tikzcd}
\end{center}
with $V\in\OG$. Note that the upper left $G$-map $V\rightarrow U_1$ factors through some $W\in \Orbit(U_1)$. Let, $\mathcal{F}_{W\subseteq U_1}$ be the full subcategory of ${\FinG}_{\chi_{[W\subseteq U_1]}/}$ spanned by those object (or better said, morpshisms) of the form
\begin{center}
\begin{tikzcd} [row sep=2em, column sep=2em]
U_1 \arrow[d] & V \arrow[d,"="]\arrow[r,"="]\arrow[l] & V\arrow[d,"="] \\
O_1 & V\arrow[r,"="]\arrow[l] & V
\end{tikzcd}
\end{center}
with $V\in\OG$. To be more precise, one such morphism can be factored as composition of maps 
\begin{center}
\begin{tikzcd} [row sep=2em, column sep=2em]
U_1 \arrow[d] & W \arrow[d,"="]\arrow[r,"="]\arrow[l] & W\arrow[d,"="] & V \arrow[d,"="]\arrow[l]\arrow[r,"="] & V \arrow[d,"="] \\
O_1 & W\arrow[r,"="]\arrow[l] & W & V \arrow[l]\arrow[r,"="] & V
\end{tikzcd}
\end{center}
Now we can write our $\infty$-category $C''_{x/} \times_{C'_{x/}} (C'_{x/})_{\alpha /}$ as the disjoint union of $\infty$-categories $C''(W)_{y/}$, for $W\in \Orbit(U_1)$, where each $C''(W)_{y/}$ is equivalent to the full subcategory of $C_{y/}$ spanned by objects (that is, morphisms) covering a map from $\mathcal{F}_{W\subseteq U_1}$. Since $f$ is a $G$-approximation all of these $\infty$-categories $C''(W)_{y/}$ have an initial object given by the $p\circ f$-coCartesian lift $y\rightarrow y_W$ covering $\chi_{[W\subseteq U_1]}$. Hence, it will suffice to show that $F_0(y)$ is a $q$-product of the objects $\{F_0(y_W)\}_{W\in \Orbit(U_1)}$. Since $E'^{\otimes}$ is a $G$-$\infty$-operad it will suffice to show that the maps $F_0(y)\rightarrow F_0(y_W)$ are inert, which is true since $F_0\in \Alg_G(C,E)$.

We have shown that $F'_x$ is a $q$-right Kan extension of $F''_x = F'_x |_{C''_{x/}}$. By \cite[4.3.2.7]{HTT} it will suffice to prove that $F''_{x}$ can be extended to a $q$-limit diagram ${C''}_{x/}^{\lhd}\rightarrow E'^{\otimes}$ covering the map ${C''}_{x/}^{\lhd}\rightarrow M\rightarrow \FinG$. Similarly as before, let $C''(W)_{x/}$ (for $W\in \Orbit(U)$ with $p(x) = [U\rightarrow O]$) be the full subcategory of $C''_{x/}$ spanned by objects covering a map from $\mathcal{F}_{W\subseteq U}$. Now the $\infty$-category $C''_{x/}$ can be written as the disjoint union of $\infty$-categories $C''(W)_{x/}$. Denote with
\[
E(W) \subseteq E^{\otimes}_{x/} \times_{{\FinG}_{[U\rightarrow O]/}} \mathcal{F}_{W\subseteq U}
\]
the full subcategory such that we have an equivalence $C''(W)_{x/}\simeq E(W) \times_{E} C_{I(-)}$. Since $f$ induces an equivalence on the underlying $G$-$\infty$-categories $C_{I(-)}\simeq E$ this is possible to do. Furthermore, we can choose inert morphisms $x\rightarrow x_W$ as coCartesian lifts of $\chi_{[W\subseteq U]}$ which in turn represent the initial objects of $E(W)$ for every $W\in \Orbit(U)$. Since $f$ induces a categorical equivalence $C_{I(-)}\xrightarrow{\simeq} E$ we can write $x_W \simeq f(c_W)$ with $\alpha_W : x\rightarrow c_W$ an initial object of $C''(W)_{x/}$. Similarly to the previous part, we are required to prove the existence of a $q$-product of objects $F_0(c_W)$, with $W\in \Orbit(U)$, which follows from the fact that $E'^{\otimes}$ is a $G$-$\infty$-operad. This finishes the proof of the second step.

The arguments above give us the following equivalent condition for Step \ref{step2}:
\theoremstyle{thmstyleone}
\newtheorem*{hlemma}{Help lemma}
\begin{hlemma}
Let $F\in \Fun_{\FinG}(M,E'^{\otimes})$ be a functor such that $F_0 = F|_{C} \in \Alg_G (C,E')$. Let $x\in E^{\otimes}$ be an element such that $p(x) = [U\rightarrow O]$, and let $c_W\in C$ be the elements such that $f\circ p(c_W) = [W\xrightarrow{=}W]$ for every $W\in \Orbit(U)$. Additionally, let $\alpha_W : x\rightarrow c_W$ be maps in $M$ covering $\chi_{[W\subseteq U]}$ constructed as above. Then $F$ is a $q$-right Kan extension of $F_0$ if and only if $F(\alpha_W)$ are inert arrows in $E'^{\otimes}$ for every $W\in \Orbit(U)$.
\end{hlemma}
This help lemma will turn fundamental in the proof of our third and final step:
\begin{step}
A functor $F\in \Fun_{\FinG}(M,E'^{\otimes})$ belongs to $\Upsilon$ if and only if it is a $q$-right Kan extension of $F_0 = F|_C$, with $F_0\in \Alg_G(C,E')$.
\end{step}
Let $F\in\Upsilon$. The functor $F_0 = F|_C$ is equivalent to the functor $F|_{E^{\otimes}} \circ f$ hence $F_0 \in \Alg_G (C,E')$.  Our help lemma now implies that $F$ is a $q$-right Kan extension of $F_0$.

For the other direction, let $F\in \Fun_{\FinG}(M,E'^{\otimes})$ and let $F_0 = F|_{C}\in \Alg_G(C,E')$. Assume that $F$ is a $q$-right Kan extension of $F_0$. Let $c\in C$ with $f\circ p(c) = [U\rightarrow O]$. Furthermore, let $x=f(c)$. We would like to show that $F(x)\rightarrow F(c)$ in an equivalence in $E'^{\otimes}$. For that, let us choose $p'$-coCartesian (with $p' = f\circ p$) lifts $c\rightarrow c_W$ over $\chi_{[W\subseteq U]}$. Since $F_0 \in \Alg_G (C,E')$ by assumption, the maps $F_0(c)\rightarrow F_0(c_W)$ are all inert. Since $E'^{\otimes}$ is a $G$-$\infty$-operad, it will suffice to show that $F(x)\rightarrow F(c_W)$ are all inert, which is true by the help lemma above.

What is left to show is that $F|_{E}\in \Alg_G(E,E')$. By \ref{gopinert} it would only suffice to check that the inert map $x\rightarrow x_V$ lying over
\begin{center}
\begin{tikzcd} [row sep=2em, column sep=2em]
U \arrow[d] & V \arrow[d,"="]\arrow[r,"="]\arrow[l] & V\arrow[d,"="] \\
O & V\arrow[r,"="]\arrow[l] & V
\end{tikzcd}
\end{center}
maps to an inert map, with $V\in\OG$. This map factors as $x\rightarrow x_W\rightarrow x_V$ lying over 
\begin{center}
\begin{tikzcd} [row sep=2em, column sep=2em]
U \arrow[d] & W \arrow[d,"="]\arrow[r,"="]\arrow[l] & W\arrow[d,"="] & V \arrow[d,"="]\arrow[l]\arrow[r,"="] & V \arrow[d,"="] \\
O & W\arrow[r,"="]\arrow[l] & W & V \arrow[l]\arrow[r,"="] & V
\end{tikzcd}
\end{center}
with $x\rightarrow x_W$ being inert and $W\in \Orbit(U)$. By the dual of \cite[2.4.1.7]{HTT} the map $x_W\rightarrow x_V$ is also inert. It is enough to show that $F(x)\rightarrow F(x_W)$ and $F(x_W)\rightarrow F(x_V)$ are inert.

Similarly as before, we can assume that $x_W = f(c_W)$ and $x_V = f(c_V)$, for some $c_W,c_V\in C$. Since the maps $F(x_W)\rightarrow F(c_W)$ and $F(x_V)\rightarrow F(c_V)$ are equivalences in $E'^{\otimes}$, for the map $F(x)\rightarrow F(x_W)$ to be inert it would suffice to show that the composite map $F(x)\rightarrow F(x_W)\rightarrow F(c_W)$ is inert, which follows from our \emph{help lemma}. As for $F(x_W)\rightarrow F(x_V)$, note that it is equivalent to $F(c_W)\rightarrow F(c_V)$ which is further equivalent to $F_0(c_W)\rightarrow F_0(c_V)$. Finally, since $F_0 \in \Alg_G (C,E')$ this map is inert and so is $F(x_W)\rightarrow F(x_V)$. With this the proof is finished.
\end{proof}
As an immediate consequence we have the following:
\begin{corollary}
Let $f:E^{\otimes}\rightarrow E'^{\otimes}$ be a map of $G$-$\infty$-operads. Furthermore, assume $f$ is a $G$-approximation map that induces an equivalence on the underlying $G$-$\infty$-categories. Then $f$ is an equivalence of $G$-$\infty$-operads.
\end{corollary}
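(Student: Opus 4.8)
The plan is to deduce this formally from Proposition \ref{ha23323}, by viewing $f$ as a $G$-approximation to its own target and then extracting an explicit inverse. The substantive work is already contained in the proposition; what remains is essentially bookkeeping together with a Yoneda-style argument.

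First I would check that $f\colon E^{\otimes}\to E'^{\otimes}$ meets the hypotheses of Proposition \ref{ha23323} when the approximated $G$-$\infty$-operad is taken to be $E'^{\otimes}$ and $E^{\otimes}$ plays the role of the approximating $\infty$-category $C$. Being a $G$-approximation map, $f$ is in particular a categorical fibration and satisfies conditions (2) and (3) of Definition \ref{defgapprox}; moreover the pullback $(E^{\otimes})_{I(-)}$ is precisely the underlying $G$-$\infty$-category $E$, and the induced map $E\to E'$ is an equivalence by hypothesis. Hence for \emph{every} $G$-$\infty$-operad $D^{\otimes}$ the proposition supplies an equivalence $\theta\colon \Alg_G(E',D)\xrightarrow{\simeq}\Alg_G(E,D)$. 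Unwinding the construction of $\theta$ from the proof — it is $\epsilon\circ s$, where $s$ is precomposition with the retraction $r$ satisfying $r|_{E^{\otimes}}=f$, and $\epsilon$ is restriction along $E^{\otimes}\hookrightarrow M$ — one sees that $\theta$ is nothing but honest precomposition $f^{\ast}=(-)\circ f$.

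With $\theta=f^{\ast}$ an equivalence for all targets, I would produce a two-sided inverse. Taking $D^{\otimes}=E^{\otimes}$, the equivalence $f^{\ast}\colon\Alg_G(E',E)\to\Alg_G(E,E)$ has an essentially unique object lying over $\mathrm{id}_{E^{\otimes}}$; call it $g\colon E'^{\otimes}\to E^{\otimes}$, a map of $G$-$\infty$-operads with $g\circ f\simeq \mathrm{id}_{E^{\otimes}}$. Taking next $D^{\otimes}=E'^{\otimes}$ and applying $f^{\ast}\colon\Alg_G(E',E')\to\Alg_G(E,E')$ to the two objects $f\circ g$ and $\mathrm{id}_{E'^{\otimes}}$, both are sent to $f$, since $(f\circ g)\circ f\simeq f\circ(g\circ f)\simeq f$ while $\mathrm{id}_{E'^{\otimes}}\circ f=f$. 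As an equivalence of $\infty$-categories is fully faithful, it reflects equivalences between objects, forcing $f\circ g\simeq\mathrm{id}_{E'^{\otimes}}$. Therefore $g$ is inverse to $f$, and $f$ is an equivalence of $G$-$\infty$-operads.

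The only points that genuinely require care are the two identifications in the middle paragraph: that $f$ really is an approximation relative to $E'^{\otimes}$ with $(E^{\otimes})_{I(-)}\simeq E$, and that the abstract equivalence $\theta$ of Proposition \ref{ha23323} coincides with precomposition by $f$, so that the inverse $g$ may be composed and manipulated as a map of $G$-$\infty$-operads. Neither constitutes a real obstacle, and no input beyond Proposition \ref{ha23323} is needed; the closing step is simply the Yoneda principle that a map inducing an equivalence on all corepresentable mapping categories is itself an equivalence.
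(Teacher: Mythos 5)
Your proof is correct and takes the same route the paper intends: the paper offers no separate argument, stating the corollary as an immediate consequence of Proposition \ref{ha23323}, and your write-up is exactly the standard unwinding of that immediacy --- apply the proposition with $C=E^{\otimes}$ (using Lemma \ref{gopinert} to identify $\Alg_G(C,D)$ with operad maps), recognize $\theta$ as precomposition $f^{\ast}$, and extract a two-sided inverse by the Yoneda-style argument. The two identifications you flag as needing care are indeed the only nontrivial points, and you handle them correctly.
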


\subsection{The universal property}

In this section we will prove the universal property of $G$-disc algebras. Let $H\leq G$ be a finite subgroup of a compact Lie group $G$. The statement that we want to prove is the following: the $G$-symmetric monoidal category of $G/H$-framed $G$-discs is freely generated by the $H$-symmetric monoidal category of $V$-framed $H$-discs, where $V$ is an $H$-representation. In other words, for a $G$-symmetric monoidal $\infty$-category $\Cs$ we have the following equivalence:
\[
\Fun^{\otimes}_G (\underline{\Disk}^{G,G/H\fr},\Cs) \simeq \Fun^{\otimes}_H (\underline{\Disk}^{H,V\fr},\underline{G/H}\underline{\times} \Cs)
\]
where $\underline{G/H}\underline{\times} \Cs$ is the underlying $H$-$\infty$-category of $\Cs$.

Note that by \ref{genv} $\underline{\Disk}^{G,G/H\fr}$ and $\underline{\Disk}^{H,V\fr}$ are equivalent to the $G$-symmetric monoidal envelopes $\Env_G (\RGHs)$ and $\Env_H (\RHVs)$ respectively. For simplicity, let us denote with $\DGH := \RGHs$ and $\DHV := \RHVs$. Now the equivalence above can be written as
\begin{equation}
\label{universalpg}
\Alg_G (\DGH, \Cic) \simeq \Alg_H (\DHV, \underline{G/H}\underline{\times} \Cic)
\end{equation}

\begin{mydescription}
\label{lp10rmk21}
Before continuing, let us give more insight into the objects of $\RHVs$ and $\RGHs$. Since the objects of $(\RHVs)_{I(-)}$ are framed over a point, they correspond to the $V$-framed $G$-discs. Informally, using \ref{framingongdiscs} we can depict these objects as
\begin{center}
\begin{tikzcd} [row sep=2em, column sep=3em]
H/K \times V \arrow[d]\arrow[r] & V \arrow[d] \\
H/K \arrow[r]\arrow[d,"="] & \ast \\
H/K &
\end{tikzcd}
\end{center}
where the lower horizontal arrow is the framing map and the square is a pullback square.

Generally speaking, the underlying $H$-$\infty$-category $\Cic_H$ of a $G$-$\infty$-category $\Cic$ is equivalent to $\underline{G/H}\underline{\times}\Ci$ where $\underline{G/H} = (\OG^{\op})_{(G/H)/}$, hence, a $K$-object in $\Cic_H$ i.e. an object over an orbit $H/K$ corresponds to an object over $G/K\rightarrow G/H$ in $\Cic$. Additionally, composing with the map $G/K\rightarrow G/H$ represents topological induction. Therefore, the $H$-disc above corresponds to the element
\begin{center}
\begin{tikzcd} [row sep=2em, column sep=3em]
E \arrow[d]\arrow[r] & G\times_H V \arrow[d] \\
G/K \arrow[r]\arrow[d,"="] & G/H \\
G/K \arrow[d] & \\
G/H &
\end{tikzcd}
\end{center}
such that the square in the diagram is a pullback square. Such element is a $G/H$-framed $G$-disc, with the framing map $G/H\rightarrow\BOG$ being adjoint to the framing map on $H$-discs $\ast\rightarrow \operatorname{BO}_n(H)$. Moreover, note that the framing map is the same as the map inducing topological induction, that is the composition of the lower two vertical arrows. Therefore we can write this element as $E \rightarrow G/K \xrightarrow{=} G/K \rightarrow G/H$ or even simpler as $G/K \xrightarrow{=} G/K \rightarrow G/H$ (since all the information on the bundle over $G/K$ is carried by the framing map). With this depiction the general object of $\RHVs$ can be written as
\[
U \rightarrow O \rightarrow G/H
\]
where $U$ is a finite $G$-set and $O\in\OG$ is an orbit, with the composition $U\rightarrow G/H$ representing the framing map as well.

Similarly, an object of $(\RGHs)_{I(-)}$ can be written in the form
\begin{center}
\begin{tikzcd} [row sep=2em, column sep=3em]
E \arrow[d]\arrow[r] & G\times_H V \arrow[d] \\
G/K \arrow[r]\arrow[d,"="] & G/H \\
G/K  & 
\end{tikzcd}
\end{center}
where the lower horizontal map is the framing map and the square is a pullback square. Again, by \ref{framingongdiscs} we can write this element as
\begin{center}
\begin{tikzcd} [row sep=2em, column sep=3em]
G/K \arrow[r]\arrow[d,"="] & G/H \\
G/K & 
\end{tikzcd}
\end{center}
with the horizontal map being the framing map. Therefore, the elements of $\RGHs$ can be written in the form
\[
O \leftarrow U \rightarrow G/H
\]
where $U$ is a finite $G$-set and $O\in\OG$ is an orbit, the left arrow is the structure map while the right arrow is the framing map.
\end{mydescription}

In order to prove the equivalence \eqref{universalpg} we would like to be able to use \ref{ha23323}.
\begin{construction}
Let us construct a map $\theta : \DHV\rightarrow\DGH$ as part of the commutative diagram
\begin{center}
\begin{tikzcd} [row sep=2em, column sep=2em]
\DHV \arrow[r,"\theta"]\arrow[d] & \DGH \arrow[d] \\
\FinH \arrow[r] & \FinG
\end{tikzcd}
\end{center}
Note that $\FinH$ is equivalent to $\underline{G/H}\underline{\times}\FinG$. Therefore the map $\FinH\rightarrow\FinG$ is a left fibration.

By \ref{lp10rmk21} we have a description of the objects of $\DHV$ and $\DGH$. The object $(U\rightarrow O\rightarrow G/H)\in\DHV$ is then sent by the map $\theta$ to the object $(O\leftarrow U\rightarrow O\rightarrow G/H)\in\DGH$.
\end{construction}

\begin{proposition}
\label{lp10thm23}
The map $\theta:\DHV\rightarrow\DGH$ is a $G$-approximation map. Moreover, $\theta$ induces an equivalence on the underlying $G$-$\infty$-categories.
\end{proposition}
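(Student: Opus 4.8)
The plan is to check the three conditions of Definition~\ref{defgapprox} for $\theta$, handling the ``moreover'' together with condition~(1). Throughout I use the explicit models of Description~\ref{lp10rmk21}: an object of $\DHV$ is a composable pair $(U\xrightarrow{a}O\xrightarrow{b}G/H)$ with $U$ a finite $G$-set, $O\in\OG$, structure map $a$, and framing map $b\circ a$; an object of $\DGH$ is a span $(O\xleftarrow{a}U\xrightarrow{d}G/H)$ with structure leg $a$ and framing leg $d$; and, by the Construction above, $\theta$ sends $(U\xrightarrow{a}O\xrightarrow{b}G/H)$ to the span $(O\xleftarrow{a}U\xrightarrow{b\circ a}G/H)$, so that $p'=p\circ\theta$ sends it to $[U\xrightarrow{a}O]\in\FinG$. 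I will also use that $\theta$ is a categorical fibration (replacing it by an equivalent one if necessary).

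I would first dispose of condition~(1) and the underlying-category statement simultaneously. Forming the pullback along $I(-)\colon\OG^{\op}\to\FinG$ forces the structure map $a$ to be an equivalence, so the objects of $C_{I(-)}$ are the data $(O\xrightarrow{=}O\xrightarrow{b}G/H)$, i.e.\ $G$-maps $b\colon O\to G/H$; the projection $C_{I(-)}\to\OG^{\op}$ is then the left fibration classifying $O\mapsto\Map_{\OG}(O,G/H)$, which is in particular a coCartesian fibration, so (1) holds. On the other side, the underlying $G$-$\infty$-category $(\DGH)_{I(-)}=\RGH$ has as objects the spans $(O\xleftarrow{=}O\xrightarrow{d}G/H)$, again $G$-maps $O\to G/H$; since a $G/H$-framing on a single disc is exactly such a map and the spaces of framed self-embeddings are contractible, $\RGH$ is also the left fibration classifying $O\mapsto\Map_{\OG}(O,G/H)$. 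The restriction of $\theta$ carries $b$ to the span whose framing leg is $b$, matching the two descriptions on the nose, so $\theta$ induces an equivalence $C_{I(-)}\xrightarrow{\simeq}\RGH$.

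Condition~(2) should then follow from the $H$-$\infty$-operad structure of $\DHV$. Given $c=(U\xrightarrow{a}O\xrightarrow{b}G/H)$ and $W\in\Orbit(U)$, I would take the inert coCartesian lift of (the $\FinH$-lift of) $\chi_{[W\subseteq U]}$ supplied by the operad axioms; concretely this is the restriction $c\to c_W=(W\xrightarrow{=}W\xrightarrow{(b\circ a)|_W}G/H)$. Applying $\theta$ yields the span that restricts $\theta(c)$ to the sub-orbit $W\subseteq U$, which is an inert morphism of $\DGH$. That $\theta$ preserves inertness here is forced by the explicit formula together with the fact that $\FinH\to\FinG$ is a left fibration and hence carries inert edges to inert edges.

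The substance of the proposition is condition~(3), and this is where I expect the genuine obstacle. Let $c=(U\xrightarrow{a}O\xrightarrow{b}G/H)$ and let $\alpha\colon x\to\theta(c)$ be active in $\DGH$, with $x=(O_1\xleftarrow{a_1}U_1\xrightarrow{d_1}G/H)$. Activeness provides a surjection of orbits $O\to O_1$ and an identification of the apex of the span representing $\alpha$ with $O\times_{O_1}U_1$; framing-compatibility then reads $d_1\circ\pi_2=b\circ\pi_1$ as $G$-maps $O\times_{O_1}U_1\to G/H$, where $\pi_1,\pi_2$ are the two projections. The key step is to extract from this that the framing of the source factors through its structure orbit: running over the fibres of $\pi_2$ (surjective because $O\to O_1$ is) shows that $b$ is constant on the fibres of $O\to O_1$, hence $b=b_1\circ(O\to O_1)$ for a unique $G$-map $b_1\colon O_1\to G/H$, and consequently $d_1=b_1\circ a_1$. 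This is exactly the statement that $x$ lies in the image of $\theta$: with $\bar x=(U_1\xrightarrow{a_1}O_1\xrightarrow{b_1}G/H)\in\DHV$ we have $\theta(\bar x)=x$, and the tautological active morphism $\bar\alpha\colon\bar x\to c$ mapping to $\alpha$ is a lift. It remains to see that $\bar\alpha$ is $\theta$-Cartesian, which I would verify by comparing mapping spaces: the factoring just established identifies the slice data in $\DHV$ with the framing-compatible span data in $\DGH$, so the defining pullback square for $\theta$-Cartesianness is an equivalence. The heart of the argument---and the step most likely to need care---is precisely the claim that activeness forces the source framing to factor through its structure orbit; the remaining verifications are bookkeeping with the span and pullback descriptions.
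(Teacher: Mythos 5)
Your verifications of conditions (1) and (2) of Definition \ref{defgapprox}, and of the ``moreover'', are essentially sound and close in substance to the paper's: (2) is the paper's argument verbatim (lift inert edges through the left fibration $\FinH\rightarrow\FinG$, then use the $H$-$\infty$-operad structure of $\DHV$), while for (1) you identify both underlying $G$-$\infty$-categories directly with the left fibration classifying $O\mapsto\Map_G(O,G/H)$, where the paper instead identifies $C_{I(-)}$ with $\RHV$ by a pullback-diagram argument and compares span descriptions of objects and mapping data; your packaging is cleaner but silently invokes the contractibility of spaces of framed self-embeddings of a single disc, which should be cited (it is in \cite{AH19}).

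The genuine gap is in condition (3), at exactly the step you flag as the heart of the matter. Your derivation of the factorization $d_1=b_1\circ a_1$ is a strict point-set computation: it uses the \emph{equality} $d_1\circ\pi_2=b\circ\pi_1$ together with discreteness of the fibres of $\pi_2$. But in $\DGH$ the framing compatibility of a morphism is only a $G$-\emph{homotopy} $d_1\circ\pi_2\simeq b\circ\pi_1$, and ``constant up to homotopy on a finite discrete fibre'' has no content when the relevant fixed-point spaces of $G/H$ are connected; in fact your conclusion itself fails for active arrows with non-invertible orbit map. Take $G=\Otwo$ and $H=\Ztwo$ generated by a reflection, so $G/H\cong S^1$ is connected, and let $K=\{\pm 1\}\subseteq SO(2)$, so that $(G/H)^K=\emptyset$ and there is \emph{no} $G$-map $G/K\rightarrow G/H$ whatsoever. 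The object $x=(G/K\leftarrow G\xrightarrow{z}G/H)$ of $\DGH$ nevertheless admits an active morphism, over the projection $G\rightarrow G/K$, into $\theta(c)$ for a suitable $c$ lying over a free orbit: all framing compatibilities then take place on free orbits, where they amount to choosing paths in the connected space $G/H$, and the required disc embeddings are nonequivariant ones. Since the framing of $x$ cannot factor through $G/K$ even up to homotopy, $x$ is not equivalent to anything in the image of $\theta$, so your $b_1$, hence your $\bar x$ and the Cartesian lift, do not exist. Note that the paper's own verification of (3) never meets this case: its diagram has identity orbit map ($O\xrightarrow{=}O$), i.e.\ it treats only fiberwise active arrows, for which the factorization of the source framing through the structure orbit is immediate from the compatibility homotopy and no fibrewise-constancy argument is needed. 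So your attempt to handle all active arrows of Definition \ref{defgapprox}(3) hits a real obstruction, not bookkeeping: one must either restrict attention to fiberwise active arrows (and then re-examine how condition (3) is consumed in Lemma \ref{weakgapprox} and Proposition \ref{ha23323}, where general actives arise from inert--active factorizations) or give a genuinely different argument for the base-change part; your concluding sketch of $\theta$-Cartesianness is also only a sketch, but that is secondary to this factorization issue.
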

\begin{proof}
In order to prove that $\theta$ is a $G$-approximation map we need to prove the three points from \ref{defgapprox}:
\begin{enumerate}
\item Consider the following commutative diagram
\begin{center}
\begin{tikzcd} [row sep=2em, column sep=2em]
\DHV \arrow[rrr,"\theta"]\arrow[dd] & & & \DGH \arrow[dd] &  \\
 & \RHV \arrow[ul]\arrow[dd] & & & \RGH \arrow[ul]\arrow[dd] \\
\FinH \arrow[rrr] & & & \FinG & \\
& \OH^{\op} \arrow[ul,"I(-)"]\arrow[rrr] & & & \OG^{\op} \arrow[ul,"I(-)"]
\end{tikzcd}
\end{center}
where the right rhombus is a pullback diagram. If we restrict our attention to the part of the diagram
\begin{center}
\begin{tikzcd} [row sep=2em, column sep=3em]
\underline{\Rep}_n^{V\fr}(H) \arrow[r]\arrow[d] & \DHV \arrow[d]\simeq \RHVs \\
\OH^{\op} \arrow[r,"I(-)"]\arrow[d] & \FinH \arrow[d] \\
\OG^{\op} \arrow[r,"I(-)"] & \FinG
\end{tikzcd}
\end{center}
we have that the upper square is a pullback square by definition. Additionally, by inspection, the lower square is a pullback square, meaning that the outer rectangle is a pullback diagram. Therefore, the pullback of $\DHV\rightarrow\FinH\rightarrow\FinG$ along $I(-):\OG^{\op}\rightarrow\FinG$ is $\RHV\rightarrow \OH^{\op} \rightarrow \OG^{\op}$. Additionally, the first map in the composition is a coCartesian fibration since $\RHV$ is an $H$-$\infty$-category and $\OH^{\op}\simeq\underline{G/H}\rightarrow \OG^{\op}$ is a left fibration, meaning that the composition is a coCartesian fibration i.e. $\RHV\rightarrow \OG^{\op}$ is a $G$-$\infty$-category. Additionally, since the right rhombus is a pullback diagram, $\theta$ induces a map that fits into the following commutative diagram
\begin{center}
\begin{tikzcd} [row sep=2em, column sep=3em]
\underline{\Rep}_n^{V\fr}(H) \arrow[r]\arrow[d] & \underline{\Rep}_n^{G/H\fr}(G) \arrow[d] \\
\underline{G/H}\simeq\OH^{\op} \arrow[r] & \OG^{\op}
\end{tikzcd}
\end{center}
\item Let $x\in\DHV$ be such that $\theta(x)$ lies over $I : U\rightarrow O \in \FinG$. Note that $I$ lies in an image of $\FinH\rightarrow\FinG$ i.e. $I$ is an image of an element $I': U'\rightarrow O'$ of $\FinH$. Since $\FinH\rightarrow\FinG$ is a left fibration (and, in particular, a coCartesian fibration), the maps $\chi_{[W\subseteq U]}$ in $\FinG$ have coCartesian lifts in $\FinH$ which are exactly maps $\chi_{[W'\subseteq U']}$. Now, considering that $\DHV\rightarrow\FinH$ is an $H$-$\infty$-operad, the coCartesian lifts of $\chi_{[W'\subseteq U']}$ exist with the source $x$. By the commutativity of the upper diagram, these lifts represent the desired coCartesian lifts $x\rightarrow x_W$ of $\chi_{[W\subseteq U]}$ in $\DHV$. It is clear that the maps $\theta(x\rightarrow x_W)$ are inert in $\DGH$.
\item Recall that, by \ref{lp10rmk21}, objects of $\DGH$ can be depicted as
\begin{center}
\begin{tikzcd} [row sep=2em, column sep=2em]
U \arrow[r]\arrow[d] & G/H \\
O &
\end{tikzcd}
\end{center}
with $U\in\FinG$ and $O\in\OG$ where the horizontal map corresponds to the framing map. Now let $x\in \DGH$ lying over $U_1\rightarrow O$, $c\in \DHV$ lying over $U_2\rightarrow O$ and let $x\rightarrow\theta(c)$ be an active arrow lying over diagram
\begin{center}
\begin{tikzcd} [row sep=2em, column sep=2em]
U_1 \arrow[r]\arrow[d]\arrow[rr, bend left] & U_2 \arrow[d] & G/H \arrow[d,"="] \\
O \arrow[r,"="] & O \arrow[r] & G/H
\end{tikzcd}
\end{center}
Note that a map in $\DGH$ should be represented by a span, but since the underlying arrow is fiberwise active, the left side of the span would be the identity, hence we omit it in the diagram. The element $\bar{x}$ of the Cartesian lift corresponds to the element $U_1 \rightarrow O \xrightarrow{=} O \rightarrow G/H$.
\end{enumerate}

What is left to show is that $\theta$ induces equivalence on the underlying $G$-$\infty$-categories. Let us denote with $\theta_{I(-)}:\DHV_{I(-)}\rightarrow \DGH_{I(-)}$ the induced functor. Note that the element of $\DGH_{I(-)}$ can be written as
\begin{center}
\begin{tikzcd} [row sep=2em, column sep=2em]
O \arrow[r]\arrow[d,"="] & G/H \\
O &
\end{tikzcd}
\end{center}
which is the same as $O\xrightarrow{=} O\rightarrow G/H$, an element of $\DHV_{I(-)}$, hence $\theta_{I(-)}$ is essentially surjective. As for the mapping spaces, recall that the mapping space in $\DHV_{I(-)}$ between $E_1 \rightarrow O_1 \xrightarrow{=} O_1 \rightarrow G/H$ and $E_2 \rightarrow O_2 \xrightarrow{=} O_2 \rightarrow G/H$ consists of the spans
\begin{center}
\begin{tikzcd} [row sep=2em, column sep=2em]
E_1 \arrow[d] & E \arrow[d]\arrow[l]\arrow[r] & E_2 \arrow[d] \\
O_1 \arrow[d,"="] & O_2 \arrow[l]\arrow[r,"="]\arrow[d,"="] & O_2 \arrow[d,"="] \\
O_1 \arrow[dr] & O_2 \arrow[l]\arrow[r,"="]\arrow[d] & O_2 \arrow[dl] \\
 & G/H &
\end{tikzcd}
\end{center}
which is the same as the span
\begin{center}
\begin{tikzcd} [row sep=2em, column sep=2em]
E_1 \arrow[d] & E \arrow[d]\arrow[l]\arrow[r] & E_2 \arrow[d] & \\
O_1 \arrow[d,"="]\arrow[drrr] & O_2 \arrow[l]\arrow[r,"="]\arrow[d,"="]\arrow[drr] & O_2 \arrow[d,"="]\arrow[dr] & \\
O_1 & O_2 \arrow[l]\arrow[r,"="] & O_2 & G/H \\
\end{tikzcd}
\end{center}
representing the map of the image of $\theta_{I(-)}$. We conclude that $\theta_{I(-)}$ is fully faithful and hence an equivalence which finishes the proof.
\end{proof}

Propositions \ref{lp10thm23} and \ref{ha23323} give us the following result:

\begin{corollary}
\label{lp10cor24}
Let $E^{\otimes}$ be a $G$-$\infty$-operad. The map $\theta$ induces an equivalence
\[
\Alg_G(\DGH,E) \xrightarrow{\simeq} \Alg_G (\DHV,E)
\]
\end{corollary}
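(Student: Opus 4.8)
The plan is to read the corollary off directly from Proposition \ref{ha23323}, applied to the approximation constructed in Proposition \ref{lp10thm23}. In the notation of \ref{ha23323} I would take the ambient $G$-$\infty$-operad to be $\DGH$, the source $C$ to be $\DHV$, the comparison map $f$ to be $\theta:\DHV\to\DGH$, and the coefficient $G$-$\infty$-operad $E'^{\otimes}$ to be the operad $E^{\otimes}$ of the statement. The one structural fact to invoke is that $\DGH=\RGHs$ is a $G$-$\infty$-operad, so that it is a legitimate input for \ref{ha23323} in the role of the ambient operad; this is the $G/H$-framed instance of the general statement (cf. \ref{genv}) that the framed variants $\RGBs$ are $G$-$\infty$-operads. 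Note that $\DHV$ itself need not be treated as a $G$-$\infty$-operad here, it enters only through the $G$-approximation $\theta$.

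With these identifications in place, the two hypotheses required by \ref{ha23323} are precisely the two conclusions of \ref{lp10thm23}: first, that $\theta$ is a $G$-approximation map, and second, that the induced functor on underlying $G$-$\infty$-categories $\theta_{I(-)}:\DHV_{I(-)}\to\DGH_{I(-)}$ is an equivalence of $G$-$\infty$-categories. Feeding these into Proposition \ref{ha23323} produces an equivalence of $\infty$-categories
\[
\Alg_G(\DGH,E)\xrightarrow{\ \simeq\ }\Alg_G(\DHV,E),
\]
given by restriction of algebras along $\theta$, which is exactly the assertion of the corollary.

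There is no substantive obstacle remaining at this stage, since the technical heart of the argument has been absorbed into Propositions \ref{lp10thm23} and \ref{ha23323}; the only care needed is bookkeeping. Specifically, one should confirm that the map labelled in the corollary is the precomposition-with-$\theta$ functor to which \ref{ha23323} applies, and that the $E$ appearing in $\Alg_G(-,E)$ denotes the coefficient $G$-$\infty$-operad $E^{\otimes}$ (its underlying $G$-$\infty$-category being irrelevant to which operad the algebras land in). Both are immediate from the definitions, so the equivalence follows at once.
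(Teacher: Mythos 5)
Your proposal is correct and matches the paper's own argument exactly: the corollary is presented there as an immediate consequence of Propositions \ref{lp10thm23} and \ref{ha23323}, with precisely the identifications you make (ambient operad $\DGH$, source $C=\DHV$, approximation $f=\theta$, coefficient operad $E^{\otimes}$). Your bookkeeping remarks — that the equivalence is precomposition along $\theta$ and that $\DHV$ enters only as the source of the $G$-approximation rather than as a $G$-$\infty$-operad in its own right — are consistent with how \ref{ha23323} is set up, so nothing further is needed.
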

We are one step away from proving our universal property:
\begin{theorem}
\label{universalthm}
Let $\Cs$ be a $G$-symmetric monoidal category. Then the $G$-symmetric monoidal category of $G/H$-framed $G$-discs is freely generated by the $H$-symmetric monoidal category of $V$-framed $H$-discs. In other words, there is an equivalence
\begin{equation*}
\Fun^{\otimes}_G (\underline{\Disk}^{G,G/H\fr},\Cs) \xrightarrow{\simeq} \Fun^{\otimes}_H (\underline{\Disk}^{H,V\fr},\Cs_H)
\end{equation*}
where $\Cs_H$ is the underlying $H$-symmetric monoidal category of $G$-symmetric monoidal category $\Cs$, and where $V$ is a $G$-representation.
\end{theorem}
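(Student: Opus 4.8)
The plan is to reduce the statement to the equivalence \eqref{universalpg} and then to deduce the latter from Corollary \ref{lp10cor24} together with a base-change identification of algebra categories. First I would translate both sides of the claimed equivalence into categories of operad algebras. By Proposition \ref{genv} we have $\underline{\Disk}^{G,G/H\fr}\simeq\Env_G(\RGHs)$ and $\underline{\Disk}^{H,V\fr}\simeq\Env_H(\RHVs)$, so the corollary following Proposition \ref{genv}, applied over $G$ and over $H$ respectively, yields equivalences
\begin{align*}
\Fun^{\otimes}_G(\underline{\Disk}^{G,G/H\fr},\Cs) &\simeq \Alg_G(\RGHs,\Cs) = \Alg_G(\DGH,\Cic),\\
\Fun^{\otimes}_H(\underline{\Disk}^{H,V\fr},\Cs_H) &\simeq \Alg_H(\RHVs,\Cs_H) = \Alg_H(\DHV,\Cic_H),
\end{align*}
where $\Cic_H=\underline{G/H}\underline{\times}\Cic$ is the underlying $H$-$\infty$-category of $\Cic$ and $\Cs_H$ is the restriction of $\Cs$ along $\FinH\to\FinG$. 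Thus it suffices to establish \eqref{universalpg}, i.e. the equivalence $\Alg_G(\DGH,\Cic)\simeq\Alg_H(\DHV,\Cic_H)$.

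For the first half of \eqref{universalpg} I would invoke Corollary \ref{lp10cor24}, which combines Propositions \ref{lp10thm23} and \ref{ha23323}: since $\theta:\DHV\to\DGH$ is a $G$-approximation inducing an equivalence on underlying $G$-$\infty$-categories, precomposition with $\theta$ gives, for the $G$-$\infty$-operad $\Cs$, an equivalence
\[
\theta^{*}\colon \Alg_G(\DGH,\Cic)\xrightarrow{\;\simeq\;}\Alg_G(\DHV,\Cic).
\]
This reduces the theorem to the remaining identification $\Alg_G(\DHV,\Cic)\simeq\Alg_H(\DHV,\Cic_H)$, i.e. to comparing $G$-algebras of $\DHV$ valued in $\Cs$ with $H$-algebras of $\DHV$ valued in the restriction $\Cs_H$.

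This last identification is where the real content lies, and I would argue it as a base-change statement. The key structural facts are that $\FinH\simeq\underline{G/H}\underline{\times}\FinG$ with the forgetful map $\FinH\to\FinG$ a left fibration, and that the restricted $H$-symmetric monoidal category is computed as the pullback $\Cs_H\simeq\Cs\times_{\FinG}\FinH$. Since the structure map $\DHV\to\FinG$ appearing in $\Alg_G(\DHV,-)$ is by construction the composite $\DHV\to\FinH\to\FinG$, the universal property of this pullback identifies functors $\DHV\to\Cs$ over $\FinG$ with prescribed underlying map through $\FinH$ with functors $\DHV\to\Cs_H$ over $\FinH$. It then remains to match the algebra conditions: a functor lies in $\Alg_G(\DHV,\Cic)$ iff it carries the inert lifts of the maps $\chi_{[W\subseteq U]}$ to inert morphisms of $\Cs$, whereas it lies in $\Alg_H(\DHV,\Cic_H)$ iff it carries inert morphisms of $\FinH$ to inert morphisms of $\Cs_H$.

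The main obstacle, and the heart of this third step, is precisely this matching of inert conditions: I would need to check that the inert morphisms of $\FinH$ are exactly the coCartesian lifts of inert morphisms of $\FinG$ along the left fibration $\FinH\to\FinG$, so that preservation of inertness over $\FinH$ and over $\FinG$ coincide, and that the pushforward functors along inert maps in $\Cs_H$ are computed by those of $\Cs$ under the pullback, which holds because $\FinH\to\FinG$ is a left fibration and hence coCartesian edges transport compatibly. Granting this, the chain
\[
\Alg_G(\DGH,\Cic)\;\xrightarrow{\;\theta^{*}\;}\;\Alg_G(\DHV,\Cic)\;\simeq\;\Alg_H(\DHV,\Cic_H)
\]
assembles into the desired equivalence, completing the proof.
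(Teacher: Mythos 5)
Your proposal is correct and follows essentially the same route as the paper's own proof: reduce via the monoidal envelope to the equivalence $\Alg_G(\DGH,\Cic)\simeq\Alg_H(\DHV,\Cic_H)$, apply Corollary \ref{lp10cor24} to replace $\DGH$ by $\DHV$, and conclude with the base-change step identifying $\Alg_G(\DHV,\Cic)\simeq\Alg_H(\DHV,\Cic_H)$ through the pullback description $\Cs_H\simeq\Cs\times_{\FinG}\FinH$ and the left-fibration property of $\FinH\to\FinG$ to transfer the inert conditions in both directions. The paper's argument performs exactly these steps, including your flagged matching of inertness across the pullback, so there is nothing substantive to add.
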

\begin{proof}
As discussed in the beginning of this section, the above statement is equivalent to
\[
\Alg_G(\DGH,\Cic) \xrightarrow{\simeq} \Alg_H(\DHV,\Cic_H)
\]
By \ref{lp10cor24} it will suffice to find an equivalence
\[
\Alg_G (\DHV,\Cic) \xrightarrow{\simeq} \Alg_H(\DHV,\Cic_H)
\]
Recall that the $H$-symmetric monoidal category $\Cs_H$ is obtained via the commutative diagram
\begin{center}
\begin{tikzcd} [row sep=2em, column sep=3em]
\Cs_H \arrow[d]\arrow[r] & \Cs \arrow[d] \\
\FinH \arrow[d]\arrow[r] & \FinG \arrow[d] \\
\underline{G/H} \arrow[r] & \OG^{\op}
\end{tikzcd}
\end{center}
where both inner rectangles (and consequently also the outer rectangle) are pullbacks.

Let $F$ be a $\DHV$-algebra object in $\Cs$ i.e. $F\in \Alg_G(\DHV,\Cic)$, and consider the following diagram
\begin{center}
\begin{tikzcd} [row sep=2em, column sep=2em]
 & \Cs_H \arrow[dr]\arrow[dd] & \\
\DHV \arrow[rr,"F" near start]\arrow[ur,dashed,"F' "]\arrow[dr]\arrow[dd] & & \Cs \arrow[dd] \\
 & \FinH \arrow[dr] & \\
\DGH \arrow[rr] & & \FinG
\end{tikzcd}
\end{center}
By the universal property of the pullback, $F$ induces a functor $F' :\DHV\rightarrow\Cs_H$. We would like to show that $F' \in \Alg_H(\DHV,\Cs_H)$.

Let $\alpha_W$ be an inert map in $\DHV$ covering the map $\chi_{[W\subseteq U]}: [U\rightarrow O] \rightarrow [W\xrightarrow{=}W]$ in $\FinH$. Since $\FinH\rightarrow\FinG$ is a left fibration and $F\in \Alg_G(\DHV,\Cs)$, $F(\alpha_W)$ is inert in $\Cs$. Again, since $\FinH\rightarrow\FinG$ is a left fibration, and consequently $\Cs_H\rightarrow\Cs$, the pullback of $F(\alpha_W)$ is inert in $\Cs_H$, which is exactly $F'(\alpha_W)$, meaning $F'\in \Alg_H(\DHV,\Cs_H)$. Similarly, if we would have taken $F'\in \Alg_H(\DHV,\Cs_H)$ the induced functor $F:\DHV\rightarrow\Cs_H \rightarrow\Cs$ would lie in $\Alg_G(\DHV,\Cs)$. By the universality of the maps to the pullback we have 
\[
\Alg_G (\DHV,\Cic) \xrightarrow{\simeq} \Alg_H(\DHV,\Cic_H)
\]
and the proof is finished.
\end{proof}

\subsection{Applications of the universal property}
\label{applications}

In this section, we will give two applications of the universal property of $G$-discs: algebras with genuine involution and $O(2)$-genuine objects, and associative algebras and $S^1$-genuine objects.

\addtocontents{toc}{\protect\setcounter{tocdepth}{1}}

\subsection*{Algebras with genuine involution and $O(2)$-genuine objects}

Let $G = O(2)$ and regard $H=\Ztwo$ as a subgroup generated by a reflection. To add on, let $\underline{\Sp}^{O(2)}$ be the $O(2)$-$\infty$-category of $O(2)$-spectra. Let $V$ be the adjoint representation of $O(2)$. It is evident that $V$ is endowed with $O(2)$-action, which, when restricted to $\Ztwo$-action becomes $\mathbb{R}^{\sigma}$ where $\sigma$ is an one dimensional sign representation.

Let $A_H$ be the $\mathbb{R}^{\sigma}$-framed $\Ztwo$-disc algebra object in $\underline{\Sp}^{\Ztwo}$. Let us give more insight into the structure of $A_H$. The objects of $\underline{\Disk}^{\Ztwo,\mathbb{R}^{\sigma}\fr}$ are given by the finite disjoint unions of
\begin{itemize}
\item $\mathbb{R}^{\sigma}$, which corresponds to the element $\mathbb{R}^{\sigma}\rightarrow \Ztwo /\Ztwo$;
\item the restriction $\Res^{\Ztwo}_{\{e\}}(\mathbb{R}^{\sigma}) = \mathbb{R}^1$, which corresponds to the element $\sqcup_{\Ztwo} \mathbb{R}^1 \rightarrow \Ztwo/\{e\}$;
\item the topological induction $\sqcup_{\Ztwo} \mathbb{R}^1$ obtained as the element $\sqcup_{\Ztwo} \mathbb{R}^1 \rightarrow \Ztwo/\{e\}\rightarrow \Ztwo / \Ztwo$.
\end{itemize}
Therefore, we can write
\begin{itemize}
\item $\Res^{\Ztwo}_{\{e\}} A_H (\mathbb{R}^{\sigma}) = A_H(\mathbb{R}^1)$;
\item $A(\sqcup_{\Ztwo} \mathbb{R}^1) \simeq N^{\Ztwo}_{e} (A_H (\mathbb{R}^{1}))$.
\end{itemize}
Additionally, the equivariant embedding
\[
(\sqcup_{\Ztwo} \mathbb{R}^1)\sqcup \mathbb{R}^{\sigma} \hookrightarrow \mathbb{R}^{\sigma}
\]
induces a map $N^{\Ztwo}_{e} (A_H (\mathbb{R}^{1})) \otimes A(\mathbb{R}^{\sigma}) \rightarrow A(\mathbb{R}^{\sigma})$. Meaning that the $\Ztwo$-spectrum $A(\mathbb{R}^{\sigma})$ has a structure of a $N^{\Ztwo}_{e} (A_H (\mathbb{R}^{1}))$-module.

Next, note that $A_H(\mathbb{R}^1)$ has a structure of an $\mathbb{E}_1$-algebra object in $\underline{\Sp}^{\Ztwo}_{[\Ztwo/\{e\}]} \simeq \Sp$, since
\[
\Fun^{\otimes}(\underline{\Disk}^{\Ztwo,\mathbb{R}^{\sigma}\fr}_{[\Ztwo/\{e\}]} , \underline{\Sp}^{\Ztwo}_{[\Ztwo/\{e\}]}) \simeq \Fun^{\otimes}(\Disk^{1\fr},\Sp) \simeq \Alg_{\mathbb{E}_1}(\Sp)
\]
Moreover, the $\Ztwo$-action makes $A_H (\mathbb{R}^1)$ into an associative ($\mathbb{E}_1$) algebra with involution i.e. $A_H (\mathbb{R}^1)$ is a $A_H (\mathbb{R}^1)\otimes A_H (\mathbb{R}^1)$-module. To add on, we have seen that this $A_H (\mathbb{R}^1)\otimes A_H (\mathbb{R}^1)$-module structure lifts to $N^{\Ztwo}_{e} (A_H (\mathbb{R}^{1}))$-module structure on $A(\mathbb{R}^{\sigma})$, hence we say that $A_H$ is an associative algebra with genuine involution.

The universal property of  equivariant disc algebras, Theorem \ref{universalthm}, gives us the equivalence
\[
\Fun^{\otimes}_{\Ztwo} (\underline{\Disk}^{\Ztwo,\mathbb{R}^{\sigma}\fr}, \underline{\Sp}^{\Ztwo}) \simeq \Fun^{\otimes}_{O(2)}(\underline{\Disk}^{O(2),S^1\fr},\underline{\Sp}^{O(2)})
\]
where we have used the fact that $O(2)/\Ztwo \cong S^1$.

Let $A\in \Fun^{\otimes}_{O(2)}(\underline{\Disk}^{O(2),S^1\fr},\underline{\Sp}^{O(2)})$ be an $O(2)$-disc algebra corresponding to $A_H$. We can regard $S^1$ as a genuine $O(2)$-object (i.e. a coCartesian section) in the $O(2)$-$\infty$-category of manifolds $\underline{\Mfld}^{O(2),S^1\fr}$ given by
\begin{align*}
\underline{S^1} : & \mathcal{O}_{O(2)}^{\op} \rightarrow \underline{\Mfld}^{O(2),S^1\fr} \\
 O & \mapsto O\times S^1
\end{align*}
Therefore, we can build the following map
\begin{center}
\begin{tikzcd} [row sep=2em, column sep=3em]
\{ \text{Associative algebras with genuine involution in } \underline{\Sp}^{\Ztwo}\} \arrow[d] \\
\{ \text{Genuine $O(2)$-objects in } \underline{\Sp}^{O(2)} \}
\end{tikzcd}
\end{center}
given by
\[
A_H \mapsto \int_{S^1} A
\]
The result of Horev, \cite[7.1.1 and 7.1.2]{AH19} tells us that the underlying $\Ztwo$-genuine object of $\int_{S^1} A$ is equivalent to the real topological Hochschild homology $\THR(A)$ (where we have written $A$ for $A_{[O(2)/C_2]}$) (see also \cite{DMPR17}). In other words, we have
\[
\left(\int_{S^1} A \right)_{[O(2)/\Ztwo]} \simeq \THR(A)
\]
Therefore, we have obtained the refinement of the $\Ztwo$-genuine structure on $\THR(A)$ to the $O(2)$-genuine structure.

\subsection*{Associative algebras and $S^1$-genuine objects}

For the second example, let us take $G=S^1$ and $H=\{e\}$ the trivial subgroup. The adjoint representation of $S^1$ is equal to $\mathbb{R}^1$ when forgetting the action, hence the $\infty$-category $\Fun^{\otimes}(\underline{\Disk}^{e,\mathbb{R}\fr}, \underline{\Sp}^{S^1}_{[S^1/e]})$ is equivalent to the $\infty$-category of $\mathbb{E}_1$-algebra objects in $ \underline{\Sp}^{S^1}_[S^1/e] \simeq \Sp$. Therefore, the theorem \ref{universalthm} gives us
\[
\Alg_{\mathbb{E}_1}(\Sp) \simeq \Fun^{\otimes}_{S^1}(\underline{\Disk}^{S^1,S^1\fr},\underline{\Sp}^{S^1})
\]
Let $A^{\operatorname{ass}}_e$ be an associative algebra object in the $\infty$-category of spectra $\Sp$ which corresponds to the $S^1$-framed $S^1$-algebra object $A$ in $\underline{\Sp}^{S^1}$. Then we can construct a map
\begin{center}
\begin{tikzcd} [row sep=2em, column sep=3em]
\{ \text{Associative algebras in } \Sp\} \arrow[d] \\
\{ \text{Genuine $S^1$-objects in } \underline{\Sp}^{S^1} \}
\end{tikzcd}
\end{center}
given by
\[
A_e^{\operatorname{ass}} \mapsto \int_{S^1} A
\]
Where the $S^1$-factorization homology is taken with respect to $S^1$ as an $S^1$-genuine object given by
\begin{align*}
\underline{S}^1 : & \mathcal{O}_{S^1}^{\op} \rightarrow \underline{\Mfld}^{S^1,S^1\fr} \\
 O & \mapsto O\times S^1
\end{align*}
By \cite[7.2.2]{AH19} we have
\[
\left( \int_{S^1} A \right)_{[S^1/C_n]}^{\Phi C_n} \simeq \THH(A;A^\tau)
\]
where $A^{\tau}$ is a $A$-$A^{\op}$-bimodule given by the formula
\begin{align*}
& A\otimes A^\tau \otimes A  \rightarrow A^\tau \\
& x \otimes a \otimes y  \rightarrow \tau x \otimes a \otimes y
\end{align*}
with $\tau\in C_n$ being the group generator.
\addtocontents{toc}{\protect\setcounter{tocdepth}{2}}

\cleardoublepage

\end{document}